\theoremstyle{plain}
\newtheorem{prop}{Proposition}
\newtheorem{lem}[prop]{Lemma}
\theoremstyle{definition}
\newtheorem{defn}[prop]{Definition}
\newtheorem{rem}[prop]{Remark}
\newtheorem{ex}[prop]{Example}
\title{Poincar\'e square series of small weight}
\author{Brandon Williams }
\subjclass[2010]{11F27, 11F30, 11F37}
\address{Department of Mathematics \\ University of California \\ Berkeley, CA 94720}
\email{btw@math.berkeley.edu}
\begin{document}

\nocite{*}

\maketitle

\begin{abstract} We extend the author's earlier computation and give coefficient formulas for the (quasimodular) Poincar\'e square series of weight $3/2$ and weight $2$ for the dual Weil representation for an even lattice.
\end{abstract}

\section{Introduction}

Let $\rho^*$ denote the dual Weil representation for an even lattice $\Lambda$ of signature $(b^+,b^-)$ with quadratic form $q$; that is, the unitary representation of the metaplectic group $\tilde \Gamma = Mp_2(\mathbb{Z})$ given on the standard generators $S,T$ by $$\rho^*(S) \mathfrak{e}_{\gamma} = \frac{\mathbf{e}((b^+ - b^-) / 8)}{\sqrt{|\Lambda'/\Lambda|}} \sum_{\beta \in \Lambda'/\Lambda} \mathbf{e}\Big( \langle \gamma, \beta \rangle \Big) \mathfrak{e}_{\beta}$$ and $$\rho^*(T) \mathfrak{e}_{\gamma} = \mathbf{e}\Big( - q(\gamma) \Big) \mathfrak{e}_{\gamma},$$ where $\Lambda'/\Lambda$ is the discriminant group of $\Lambda$ and $\mathfrak{e}_{\gamma}$, $\gamma \in \Lambda'/\Lambda$ is the natural basis of the group ring $\mathbb{C}[\Lambda'/\Lambda]$. \\

In \cite{W1} the author gave formulas for a family of modular forms $Q_{k,m,\beta} \in M_k(\rho^*)$ with rational coefficients with the property that $Q_{k,m,\beta} - E_k$ span all cusp forms (where $E_k$ is the Eisenstein series) and which are characterized through the Petersson scalar product by $$(f, Q_{k,m,\beta} - E_k) = 2 \frac{\Gamma(k-1)}{(4m\pi)^{k-1}} \sum_{\lambda = 1}^{\infty} \frac{c(m \lambda^2, \lambda \beta)}{\lambda^{2k-2}}, \; \; f \in S_k(\rho^*).$$ This construction is valid in weights $k \ge 5/2$; and for weight $k \ge 3$, $Q_{k,m,\beta}$ is just the zero-value of a Jacobi Eisenstein series for an appropriate generalization of $\rho^*$. \\

The purpose of this note is to extend this construction to weights $k = 3/2$ and $k = 2.$ These cases are more difficult because the Eisenstein series may fail to define a modular form; in fact, it is not hard to find lattices where $M_k(\rho^*) = 0$. For example, the space of scalar-valued modular forms of weight $2$ is zero. These weights remain relevant to the problem that motivated \cite{W1}; namely, computing spaces of obstructions for the existence of Borcherds products. Modular forms of weight $k = 3/2$ resp$.$ $k=2$ are obstructions to the existence of Borcherds products on Grassmannians $G(2,1)$ (equivalently, scalar modular forms) resp$.$ $G(2,2)$ (equivalently, Hilbert modular forms) as explained in \cite{Bo}. \\

The failure of the Jacobi Eisenstein series of weight $k \le 5/2$ to define a Jacobi form is closely related to the failure of the usual Eisenstein series of weight $k - 1/2$ to define a modular form. In particular, $k = 2$ is the most difficult weight to study, since Eisenstein series of weight $3/2$ tend to be mock modular forms that require a complicated real-analytic correction term to transform correctly under $\tilde \Gamma$. \\

Even in the cases where there are no cusp forms, the computation of $Q_{k,m,\beta}$ may be interesting. We give two examples at the end where the equality of quasimodular forms $Q_{2,1,0} = E_2$ results in a nontrivial identity.

\textbf{Acknowledgements:} I am grateful to Richard Borcherds for helpful discussions.

\tableofcontents

\section{Notation and background}
We abbreviate $e^{2\pi i x}$ by $\mathbf{e}(x).$ \\

$\Lambda$ is an even lattice with nondegenerate quadratic form $q$ of signature $(b^+,b^-)$ and dimension $e = b^+ + b^-$. The corresponding discriminant form is $\Lambda'/\Lambda$ with group ring $\mathbb{C}[\Lambda'/\Lambda]$. The natural basis of $\mathbb{C}[\Lambda'/\Lambda]$ is denoted $\mathfrak{e}_{\gamma}$, $\gamma \in \Lambda'/\Lambda,$ and $\langle -,- \rangle$ is the scalar product $$\Big \langle \sum_{\gamma \in \Lambda'/\Lambda} a_{\gamma} \mathfrak{e}_{\gamma}, \sum_{\gamma \in \Lambda'/\Lambda} b_{\gamma} \mathfrak{e}_{\gamma} \Big \rangle = \sum_{\gamma \in \Lambda'/\Lambda} a_{\gamma} \overline{b_{\gamma}}.$$ We write $d_{\gamma}$ and $d_{\beta}$ to denote the denominator, or level, of $\gamma,\beta \in \Lambda'/\Lambda$; these are the smallest natural numbers such that $$d_{\gamma} \cdot \gamma, \; \; d_{\beta} \cdot \beta \in \Lambda.$$

 We denote by $\tilde \Gamma = Mp_2(\mathbb{Z})$ the metaplectic group, which is the double cover of $SL_2(\mathbb{Z})$ consisting of pairs $(M,\phi)$, with $M = \begin{pmatrix} a & b \\ c & d \end{pmatrix} \in SL_2(\mathbb{Z})$ and a branch $\phi$ of the square root of $c \tau + d$ on the upper half-plane $\mathbb{H}$. The square root $\phi$ is almost always omitted from notation. Recall that $\tilde \Gamma$ is generated by the elements $S = \Big( \begin{pmatrix} 0 & -1 \\ 1 & 0 \end{pmatrix}, \sqrt{\tau} \Big)$ and $T = \Big( \begin{pmatrix} 1 & 1 \\ 0 & 1 \end{pmatrix}, 1 \Big)$ with defining relations $S^8 = I$ and $S^2 = (ST)^3$. \\
 
 The Weil representation is the map $$\rho : \tilde \Gamma \longrightarrow \mathrm{Aut} \, \mathbb{C}[\Lambda'/\Lambda],$$ $$\rho(T)\mathfrak{e}_{\gamma} = \mathbf{e}\Big( q(\gamma) \Big) \mathfrak{e}_{\gamma}, \; \; \rho(S) \mathfrak{e}_{\gamma} = \frac{\mathbf{e}( (b^- - b^+)/8)}{\sqrt{|\Lambda'/\Lambda|}} \sum_{\beta \in \Lambda'/\Lambda} \mathbf{e}\Big( \langle \gamma, \beta \rangle \Big) \mathfrak{e}_{\beta},$$  with unitary dual $\rho^*$. \\
 
  $\mathcal{H}$ denotes the Heisenberg group. The underlying set is $\mathcal{H} = \mathbb{Z}^3$, and the group operation is $$(\lambda_1,\mu_1,t_1) \cdot (\lambda_2,\mu_2,t_2) = (\lambda_1 + \lambda_2, \mu_1 + \mu_2, t_1 + t_2 + \lambda_1 \mu_2 - \lambda_2 \mu_1).$$ For any $\beta \in \Lambda'/\Lambda$, we define a finite analogue of the classical Schr\"odinger representation by $$\sigma_{\beta} : \mathcal{H} \longrightarrow \mathrm{Aut}\, \mathbb{C}[\Lambda'/\Lambda], \; \; \sigma_{\beta}(\lambda,\mu,t) \mathfrak{e}_{\gamma} = \mathbf{e}\Big( \mu \langle \beta, \gamma \rangle + (t - \lambda \mu) q(\beta) \Big) \mathfrak{e}_{\gamma - \lambda \beta}.$$ The Jacobi group is the semidirect product $\mathcal{J} = \mathcal{H} \rtimes \tilde \Gamma$ by the action $$(\lambda,\mu,t) \cdot \begin{pmatrix} a & b \\ c & d \end{pmatrix} = (a \lambda + c\mu, b\lambda + d\mu,t),$$ and for $\beta \in \Lambda'/\Lambda$ we denote by $\rho_{\beta}$ the representation $$\rho_{\beta} : \mathcal{J} \longrightarrow \mathrm{Aut} \, \mathbb{C}[\Lambda'/\Lambda]$$ that restricts to $\rho$ on $\tilde \Gamma$ and to $\sigma_{\beta}$ on $\mathcal{H}$. (See section 3 of \cite{W1}.) \\
 
 The weight-$k$ action of $\tilde \Gamma$ on holomorphic functions $$f : \mathbb{H} = \{\tau = x + iy: \, y > 0\} \longrightarrow \mathbb{C}$$ is written using the Petersson slash operator: $$f \Big|_{k,\rho^*} M (\tau) = (c \tau + d)^{-k} \rho^*(M)^{-1} f\Big( \frac{a \tau + b}{c \tau + d} \Big), \; \; M = \begin{pmatrix} a & b \\ c & d \end{pmatrix} \in \tilde \Gamma,$$ and a modular form of weight $k$ for $\rho^*$ is a holomorphic function $f$ satisfying $f|_{k,\rho^*} M = f$ for all $M \in \tilde \Gamma$ and the usual growth condition at $\infty$. We denote by $M_k(\rho^*)$ the space of weight-$k$ modular forms and by $S_k(\rho^*)$ the space of weight-$k$ cusp forms, which are modular forms that vanish in $\infty$. \\
 
  Both $M_k(\rho^*)$ and $S_k(\rho^*)$ are always finite-dimensional. In weight $k > 2$ the dimensions of $M_k(\rho^*)$ and $S_k(\rho^*)$ can be computed effeciently through the Riemann-Roch theorem. This tends to fail in weight $k \le 2$, where most formulas instead produce expressions for the ``Euler characteristic" $\mathrm{dim}\, M_k(\rho^*) - \mathrm{dim}\, S_{2-k}(\rho)$ instead. On the other hand, Ehlen and Skoruppa \cite{ES} have described an algorithm that computes dimensions in weight $k=2$ and $k=3/2$ that in practice seems quite efficient, relying on the known structure for $M_0(\rho^*)$ (which consists of constant Weil invariants) and $M_{1/2}(\rho^*)$ (where the components are theta series and related oldforms by the Serre-Stark theorem \cite{SS}) which was computed more precisely in \cite{Sk}). \\
 
Similarly, for $\beta \in \Lambda'/\Lambda$ and $m \in \mathbb{Z} - q(\beta),$ the weight-$k$ and index-$m$ action of the Jacobi group $\mathcal{J}$ on holomorphic functions $f(\tau,z)$ (where $\tau \in \mathbb{H}$ and $z \in \mathbb{C}$) is \begin{align*} &\quad f \Big|_{k,m,\rho^*} (\zeta,M) (\tau,z) \\ &= (c \tau + d)^{-k} \mathbf{e}\Big( m \lambda^2 \tau + 2m \lambda z + m(\lambda \mu + t) - \frac{cm (z + \lambda \tau + \mu)^2}{c \tau + d} \Big) \rho_{\beta}^*(\zeta,M)^{-1} f\Big( \frac{a \tau + b}{c \tau + d}, \frac{z + \lambda \tau + \mu}{c \tau + d} \Big),\end{align*} for $M = \begin{pmatrix} a & b \\ c & d \end{pmatrix} \in \tilde \Gamma$ and $\zeta = (\lambda,\mu,t) \in \mathcal{H}$, and we define Jacobi forms of weight $k$ and index $m$ for $\rho_{\beta}^*$ to be holomorphic functions $f$ satisfying $f |_{k,m,\rho^*_{\beta}} (\zeta,M) = f$ for all $(\zeta,M) \in \mathcal{J}$ and satisfying a growth condition at $\infty$: the Fourier coefficient of $q^n \zeta^r \mathfrak{e}_{\gamma}$ must be $0$ unless $r^2 \le 4mn$. Here, $q = \mathbf{e}(\tau)$ and $\zeta = \mathbf{e}(z).$ \\

In weight $k \ge 3$, the basic example of a Jacobi form is the Jacobi Eisenstein series $$E_{k,m,\beta}(\tau,z) = \sum_{(\zeta,M) \in \mathcal{J}_{\infty} \backslash \mathcal{J}} \mathfrak{e}_0 \Big|_{k,m,\rho_{\beta}^*} (\zeta,M).$$ (We interpret $\mathfrak{e}_0$ as a constant function of $\tau$ and $z$.) Here, $\mathcal{J}_{\infty}$ is the subgroup of $\mathcal{J}$ that fixes $\mathfrak{e}_0$: it is generated by $S^2$ and $T$ and by the elements of $\mathcal{H}$ of the form $(0,\mu,t)$, \, $\mu,t \in \mathbb{Z}$. A formula for the Fourier coefficients of $E_{k,m,\beta}(\tau,z)$ is given in \cite{W1}. The expressions for the Fourier coefficients there make sense for $k \in \{3/2,2,5/2\}$ as well, but the result is usually not a Jacobi form; we also denote this series by $E_{k,m,\beta}$. \\

At several points throughout this note we will consider the integral $$I(k,y,\omega,s) = y^{1-k-2s} e^{2\pi \omega y} \int_{-\infty}^{\infty} (t+i)^{-k} (t^2 + 1)^{-s} \mathbf{e}(-\omega y t) \, \mathrm{d}t$$ and a Dirichlet series $\tilde L(n,r,\gamma,s)$ which are defined in section 3.

\section{The real-analytic Jacobi Eisenstein series}

Fix an even lattice $\Lambda$, an element $\beta \in \Lambda'/\Lambda$ and a positive number $m \in \mathbb{Z} - q(\beta).$

\begin{defn} The \textbf{real-analytic Jacobi Eisenstein series} of weight $k$ and index $m$ twisted at $\beta$ is $$E_{k,m,\beta}^*(\tau,z,s) = \frac{1}{2} \sum_{c,d}\sum_{\lambda \in \mathbb{Z}} (c \tau + d)^{-k} |c \tau + d|^{-2s} \mathbf{e}\Big( m \lambda^2 (M \cdot \tau) + \frac{2m \lambda z - cm z^2}{c \tau + d} \Big) \rho^*(M)^{-1} \mathfrak{e}_{\lambda \beta}.$$ Here, $c,d$ runs through all pairs of coprime integers, and $M = \begin{pmatrix} a & b \\ c & d \end{pmatrix} \in \tilde \Gamma$ is any element with bottom row $(c,d)$. This series converges locally uniformly for $\mathrm{Re}[s] > \frac{3-k}{2}.$
\end{defn}

\begin{rem} Writing $$E_{k,m,\beta}^*(\tau,z,s) = y^{-s} \cdot \sum_{(\zeta,M) \in \mathcal{J}_{\infty} \backslash \mathcal{J}} (y^s \mathfrak{e}_0)\Big|_{k,m,\rho_{\beta}^*} (\zeta,M)$$ makes it clear that $E_{k,m,\beta}^*(\tau,z,s)$ transforms under the Jacobi group by $$E_{k,m,\beta}^*\Big( \frac{a \tau + b}{c \tau + d}, \frac{z}{c \tau + d}, s \big) = (c \tau + d)^k |c \tau + d|^{2s} \mathbf{e}\Big( \frac{mcz^2}{c \tau + d} \Big) \rho^*(M) E_{k,m,\beta}^*(\tau,z,s)$$ and $$E_{k,m,\beta}^*(\tau,z+\lambda \tau + \mu) = \mathbf{e}\Big( -m\lambda^2 \tau - 2m \lambda z - m (\lambda \mu + t) \Big) \sigma_{\beta}^*(\zeta) E_{k,m,\beta}^*(\tau,z,s)$$ for any $M = \begin{pmatrix} a & b \\ c & d \end{pmatrix} \in \tilde \Gamma$ and $\zeta = (\lambda,\mu,t) \in \mathcal{H}.$ 
\end{rem}

Using the argument of \cite{W1}, we see that $E_{k,m,\beta}^*(\tau,z,s)$ has the Fourier expansion $$E_{k,m,\beta}^*(\tau,z,s) = \sum_{\lambda \in \mathbb{Z}} q^{m \lambda^2} \zeta^{2m \lambda} \mathfrak{e}_{\lambda \beta} + \sum_{\gamma \in \Lambda'/\Lambda} \sum_{r \in \mathbb{Z} - \langle \gamma, \beta \rangle} \sum_{n \in \mathbb{Z} - q(\gamma)} c'(n,r,\gamma,s,y) q^n \zeta^r \mathfrak{e}_{\gamma},$$ where $q = \mathbf{e}(\tau)$ and $\zeta = \mathbf{e}(z)$ and the coefficient $c'(n,r,\gamma,s,y)$ represents the contribution from all $M \in \tilde \Gamma_{\infty} \backslash \tilde \Gamma$ other than the identity, given by 
$$c'(n,r,\gamma,s,y) = \frac{\sqrt{i}^{b^- - b^+ - 1}}{\sqrt{2m |\Lambda'/\Lambda|}} I(k-1/2,y,n-r^2/4m,s) \tilde L(n,r,\gamma,k+e/2+2s).$$ Here, $I(k,y,\omega,s)$ denotes the integral $$I(k,y,\omega,s) = y^{1-k-2s} e^{2\pi \omega y} \int_{-\infty}^{\infty} (t+i)^{-k} (t^2 + 1)^{-s} \mathbf{e}(-\omega y t) \, \mathrm{d}t,$$ and $\tilde L$ is the $L$-series $$\tilde L(n,r,\gamma,s) = \zeta(s - e - 1)^{-1} L(n,r,\gamma,s-1),$$ where $$L(n,r,\gamma,s) = \prod_{p \, \mathrm{prime}} \Big( \sum_{\nu=0}^{\infty} \mathbf{N}(p^{\nu}) p^{-\nu s} \Big),$$ and $\mathbf{N}(p^{\nu})$ is the number of zeros $(v,\lambda) \in \mathbb{Z}^{e+1} / p^{\nu} \mathbb{Z}^{e+1}$ of the polynomial $q(v + \lambda \beta - \gamma) + m \lambda^2 - r\lambda + n.$

\begin{rem} Gross and Zagier consider in \cite{GZ} the integral $$V_s(\omega) = \int_{-\infty}^{\infty} (t+i)^{-k} (t^2 + 1)^{-s} \mathbf{e}(-\omega t) \, \mathrm{d}t,$$ (notice that $k$ in that paper represents $\frac{k+1}{2}$ here), and they show that for $\omega \ne 0$, the completed integral $$V_s^*(\omega) = (\pi |\omega|)^{-s-k} \Gamma(s+k) V_s(\omega)$$ is an entire function of $s$ that satisfies the functional equation $$V_s^*(\omega) = \mathrm{sgn}(\omega) V_{1-k-s}^*(\omega).$$ Since $$I(k,y,\omega,s) = \frac{y^{1-s} e^{2\pi \omega y} (\pi |\omega|)^{s+k}}{\Gamma(s+k)} V_s^*(\omega y),$$ this extends $I(k,y,\omega,s)$ meromorphically to all $s \in \mathbb{C}$ and gives the functional equation $$I(k,y,\omega,s) = \mathrm{sgn}(\omega) (\pi |\omega| y^{-1})^{2s+k-1} \frac{\Gamma(1-s)}{\Gamma(s+k)} I(k,y,\omega,1-k-s), \; \; \omega \ne 0.$$ The integral for $\omega = 0$ is $$I(k,y,0,s) = 2\pi (-i)^k (2y)^{1-k-2s} \frac{\Gamma(2s+k-1)}{\Gamma(s)\Gamma(s+k)}.$$
\end{rem}

\begin{rem} The local $L$-series $$L_p(n,r,\gamma,s) = \sum_{\nu=0}^{\infty} \mathbf{N}(p^{\nu}) p^{-\nu s}$$ that occur in $L(n,r,\gamma,s)$ can be evaluated in the same way as the local $L$-series of \cite{W2}. Namely, for fixed $\gamma,\beta \in \Lambda'/\Lambda$ and $n \in \mathbb{Z}-q(\gamma)$, $m \in \mathbb{Z}-q(\beta)$, $r \in \mathbb{Z} - \langle \gamma, \beta \rangle$, we define discriminants $$\mathcal{D}' = d_{\beta}^2 d_{\gamma}^2 (-1)^{e/2 + 1} (4mn - r^2) |\Lambda'/\Lambda|$$ if $e$ is even and $$D' = 2m d_{\beta}^2 (-1)^{(e+1)/2} |\Lambda'/\Lambda|$$ if $e$ is odd. 

Define the  ``bad primes" to be $p=2$ as well as all odd primes dividing $|\Lambda'/\Lambda|$ or $m d_{\beta}^2$ or the numerator or denominator of $(n - r^2 / 4m) d_{\beta}^2 d_{\gamma}^2$, and set 

$$\mathcal{D} = \mathcal{D}' \cdot \prod_{\mathrm{bad}\, p} p^2, \; \; D = D' \cdot \prod_{\mathrm{bad}\, p} p^2.$$ If $e$ is even, then for primes $p \nmid \mathcal{D}$, $$L_p(n,r,\gamma,s) = \sum_{\nu=0}^{\infty} \mathbf{N}(p^{\nu}) p^{-\nu s} = \begin{cases} \frac{1}{1 - p^{e-s}} \Big[ 1 + \left( \frac{\mathcal{D}}{p} \right) p^{e/2 - s} \Big] : & r^2 / 4m - n \ne 0; \\ \\ \frac{1 - p^{e - 2s}}{(1 - p^{e-s})(1 - p^{1+e-2s})}: & r^2 / 4m - n = 0; \end{cases}$$ and if $e$ is odd, then for primes $p \nmid D$, $$L_p(n,r,\gamma,s) = \sum_{\nu=0}^{\infty} \mathbf{N}(p^{\nu}) p^{-\nu s} = \begin{cases} \frac{1}{1 - p^{e-s}} \Big[ 1 - \left( \frac{D}{p} \right) p^{(e-1)/2 - s}: & r^2 / 4m - n \ne 0; \\ \\ \frac{1 - \left(\frac{D}{p} \right) p^{(e-1)/2 - s}}{(1 - p^{e-s}) \Big[ 1 - \left( \frac{D}{p} \right) p{(e+1)/2 - s} \Big]}: & r^2 / 4m - n = 0; \end{cases}$$ where $\left( \frac{D}{p} \right), \left( \frac{\mathcal{D}}{p}\right)$ denote the Legendre (quadratic reciprocity) symbol. This gives the meromorphic extensions $$\tilde L(n,r,\gamma,s) = \begin{cases} \frac{L(s-1-e/2, \chi_{\mathcal{D}})}{\zeta(2s-2-e)} \prod_{\mathrm{bad}\, p} \frac{1 - p^{e+1-s}}{1 - p^{e+2-2s}} L_p(n,r,\gamma,s-1) : & r^2 / 4m - n \ne 0; \\ \\ \frac{\zeta(2s-3-e)}{\zeta(2s-2-e)} \prod_{\mathrm{bad}\, p} \frac{(1 - p^{e+1-s})(1 - p^{e-3-2s})}{1 - p^{e-2-2s}} L_p(n,r,\gamma,s-1) : & r^2 / 4m - n = 0; \end{cases}$$ for even $e$, and $$\tilde L(n,r,\gamma,s) = \begin{cases} \frac{1}{L(s - (e+1)/2, \chi_D)} \prod_{\mathrm{bad}\, p} \Big[ (1 - p^{e+1-s}) L_p(n,r,\gamma,s-1) \Big] : & r^2 / 4m - n \ne 0; \\  \\ \frac{L(s-(e+3)/2, \chi_D)}{L(s-(e+1)/2, \chi_D)} \prod_{\mathrm{bad}\, p} \Big[ (1 - p^{e+1-s}) L_p(n,r,\gamma,s-1) \Big] : & r^2 / 4m - n = 0; \end{cases}$$ for odd $e$.
\end{rem}

Together, this gives the analytic continuation of the Fourier coefficients $c'(n,r,\gamma,s,y)$ of $E_{k,m,\beta}^*(\tau,z,s)$ to $s \in \mathbb{C}$ (possibly with poles). Therefore, the series $$E_{k,m,\beta}^*(\tau,z,s) = \sum_{\lambda \in \mathbb{Z}} q^{m \lambda^2} \zeta^{2m \lambda} \mathfrak{e}_{\lambda \beta} + \sum_{n,r,\gamma} c'(n,r,\gamma,s,y) q^n \zeta^r \mathfrak{e}_{\gamma}$$ has an analytic continuation: the functional equations for $I(k,y,\omega,s)$ and $L(s,\chi_D)$, $L(s,\chi_{\mathcal{D}})$ imply convergence away from $\mathrm{Re}[s] > 2 - k/2.$

\begin{rem} We denote by $E_{k,m,\beta}(\tau,z)$ the series that results by naively evaluating the coefficient formula of \cite{W1} at $k = 3/2$ or $k = 2$ (without the weight $5/2$ correction). In the derivation of this formula it was assumed that $I(k-1/2,y,n-r^2 / 4m,0) = 0$  for $n - r^2 / 4m \le 0$ and that $\tilde L(n,r,\gamma,s)$ is holomorphic at $s = 0$. These assumptions are not generally satisfied when $k \le 5/2$, and $E_{k,m,\beta}(\tau,z)$ generally fails to be a Jacobi form in those cases. (In particular, $E_{k,m,\beta}(\tau,0)$ generally fails to be a modular form.)
\end{rem}

\section{A Petersson scalar product}

Recall that the Petersson scalar product on $S_k(\rho^*)$ is defined by $$(f,g) = \int_{\tilde \Gamma \backslash \mathbb{H}} \langle f(\tau), g(\tau) \rangle y^{k-2} \, \mathrm{d}x \, \mathrm{d}y, \; \; f,g \in S_k(\rho^*).$$ This is well-defined because cusp forms $f(\tau)$ satisfy the ``trivial bound'' $\|f(\tau)\| \le C \cdot y^{-k/2}$ for some constant $C$ (this is clear on the standard fundamental domain by continuity, and $\|f(\tau)\| y^{k/2}$ is invariant under $\tilde \Gamma$), and because $\langle f(\tau), g(\tau) \rangle y^{k-2}\, \mathrm{d}x \, \mathrm{d}y$ is invariant under $\tilde \Gamma$. More generally, we can define $(f,g)$ for any functions $f,g$ that transform like modular forms of weight $k$ and for which the integral above makes sense. (This includes the case that $f,g \in M_k(\rho^*)$ and only one of $f,g$ is a cusp form.) \\

In many cases it is useful to apply the following ``unfolding argument'' to evaluate $\langle f,g \rangle$, which is well-known. If $g(\tau)$ can be written in the form $$g(\tau) = \sum_{M \in \tilde \Gamma_{\infty} \backslash \tilde \Gamma} u \Big|_{k,\rho^*} M$$ for some function $u(\tau)$ that decays sufficiently quickly as $y \rightarrow \infty$, then for any cusp form $f$, \begin{align*} (f,g) &= \int_{\tilde \Gamma \backslash \mathbb{H}} \sum_{M \in \tilde \Gamma_{\infty} \backslash \Gamma} \langle f, u|_{k,\rho^*} M \rangle y^{k-2} \, \mathrm{d}x \, \mathrm{d}y \\ &= \int_{-1/2}^{1/2} \int_0^{\infty} \langle f,u \rangle y^{k-2} \, \mathrm{d}y \, \mathrm{d}x. \end{align*} This is because there is a unique representative of every class $M \in \tilde \Gamma_{\infty} \backslash \Gamma$ that maps the strip $[-1/2,1/2] \times [0,\infty)$ to itself, ``unfolding" the fundamental domain of $\tilde \Gamma \backslash \mathbb{H}$ to the strip.

\begin{ex} Taking the Petersson scalar product with the real-analytic Eisenstein series $$y^s E_k^*(\tau,s) = \sum_{M \in \tilde \Gamma_{\infty} \backslash \tilde \Gamma} (y^s \mathfrak{e}_0) \Big|_{k,\rho^*} M$$ gives $$\langle f, y^s E_k^*(\tau,s) \rangle = \int_0^{\infty} \underbrace{ \int_{-1/2}^{1/2} \langle f(\tau), \mathfrak{e}_0 \rangle \, \mathrm{d}x}_{=0} y^{k+s-2} \, \mathrm{d}y = 0$$ for all cusp forms $f$ and sufficiently large $\mathrm{Re}[s]$ (and more generally by analytic continuation).
\end{ex}

The more important example will be $$g(\tau) = y^s (E_{k,m,\beta}^*(\tau,0,s) - E_k^*(\tau,s)) = \sum_{M \in \tilde \Gamma_{\infty} \backslash \tilde \Gamma} \Big( \sum_{\lambda \ne 0} y^s \mathbf{e}(m \lambda^2 \tau) \mathfrak{e}_{\lambda \beta} \Big) \Big|_{k,\rho^*} M.$$ For any cusp form $f(\tau) = \sum_{\gamma \in \Lambda'/\Lambda} \sum_{n \in \mathbb{Z} - q(\gamma)} c(n,\gamma) q^n \mathfrak{e}_{\gamma}$ and large enough $\mathrm{Re}[s]$, the unfolding argument gives \begin{align*} (f,g) &= \sum_{\lambda \ne 0} \int_{-1/2}^{1/2} \int_0^{\infty} \langle f(\tau), \mathbf{e}(m \lambda^2 \tau) \mathfrak{e}_{\lambda \beta} \rangle y^{k+s-2} \, \mathrm{d}y \, \mathrm{d}x \\ &= 2 \cdot \sum_{\lambda=1}^{\infty} c(m \lambda^2, \lambda \beta) \int_0^{\infty} e^{-4\pi m \lambda^2 y} y^{k+s-2} \, \mathrm{d}y \\ &= 2 \cdot \Gamma(k+s-1) \sum_{\lambda = 1}^{\infty} \frac{c(m \lambda^2, \lambda \beta)}{(4\pi m \lambda^2)^{k+s-1}}. \end{align*}

\begin{rem} Series of the form $\sum_{\lambda = 1}^{\infty} \frac{c(m\lambda^2, \lambda \beta)}{\lambda^s}$ are closely related to symmetric square $L$-functions (see for example \cite{Sh}) and have meromorphic continuations to the entire plane (for which one can reduce to the scalar case, since the components of $f$ are modular forms of higher level). For $k \ge 5/2$, a M\"obius inversion argument was used in \cite{W1} to show that if a cusp form $f(\tau) = \sum_{n,\gamma} c(n,\gamma) q^n \mathfrak{e}_{\gamma}$ satisfies $\sum_{\lambda = 1}^{\infty} \frac{c(m \lambda^2, \lambda \beta)}{(4\pi m \lambda^2)^{k-1}} = 0$ for all $\beta \in \Lambda'/\Lambda$ and $m \in \mathbb{Z} - q(\beta)$, $m > 0$, then $f = 0$ identically. For $k = 3/2$ or $k = 2$, this argument does not seem rigorous when applied to the analytic continuation of $\sum_{\lambda = 1}^{\infty} \frac{c(m \lambda^2, \lambda \beta)}{(4\pi m \lambda^2)^{k + s - 1}}$ at $s=0$ although in practice it seems to hold.
\end{rem}

In the following sections, we will construct modular forms $Q_{3/2,m,\beta}(\tau) \in M_{3/2}(\rho^*)$ resp. cusp forms $Q_{2,m,\beta} - E_2 \in M_2(\rho^*)$ with rational coefficients that satisfy $$(f,Q_{k,m,\beta}) = 2 \Gamma(k-1) \cdot \sum_{\lambda = 1}^{\infty} \frac{c(m \lambda^2, \lambda \beta)}{(4\pi m \lambda^2)^{k+s-1}} \Big|_{s=0}.$$ For the above reason, the proof of \cite{W1} that such forms contain $S_k(\rho^*)$ within their span is not rigorous when $k = 3/2$ or $k = 2$ although the author is not aware of any examples where this fails.

\section{Weight $3/2$}

In weight $k = 3/2$, the $L$-series term is $$\tilde L(n,r,\gamma,3/2 + e/2 + 2s) = \begin{cases} \frac{1}{L(2s,\chi_D)} \prod_{\mathrm{bad}\, p} (1 - p^{e/2 - 1 - 2s}) L_p(n,r,\gamma,1/2 + e/2 + 2s): & n - r^2 / 4m \ne 0; \\ \\ \frac{L(2s,\chi_D)}{L(1+2s,\chi_D)} \prod_{\mathrm{bad}\, p} (1 - p^{e/2 - 1 - 2s}) L_p(n,r,\gamma,1/2 + e/2 + 2s) : & n - r^2 / 4m = 0. \end{cases}$$

This is holomorphic in $s = 0$ because the Dirichlet $L$-series $L(s,\chi_D)$ never has a pole at $s=0$ or a zero at $s=1$ and because the local $L$-factors $L_p(n,r,\gamma,1/2 + e/2 + 2s)$ are rational functions of $s$ with finitely many poles, while the dimension $e$ can be made arbitrarily large without changing the underlying discriminant form (and therefore the value of $\tilde L$). Note that $L_p(n,r,\gamma,1/2 + e/2 + 2s)$ may have a simple pole at $0$ if $e = 2$, but this is canceled by the factor $1 - p^{e/2 - 1 - 2s}$; in this case, we will write $$(1 - p^{e/2 - 1}) L_p(n,r,\gamma, 1/2 + e/2) = \lim_{s \rightarrow 0} (1 - p^{e/2 - 1 - 2s}) L_p(n,r,\gamma, 1/2 + e/2 + 2s)$$ by abuse of notation. \\

The coefficient formula \cite{W1} still requires a correction because the zero-value $I(1,y,0,0) = -\pi i$ is nonzero. This is easiest to calculate as a Cauchy principal value: $$I(1,y,0,0) = \lim_{s \rightarrow 0} \int_{-\infty}^{\infty} (t+i)^{-1} (t^2 + 1)^{-s} \, \mathrm{d}t = PV\Big[ \int_{-\infty}^{\infty} (t+i)^{-1} \Big] = -\pi i.$$ The corrected series \begin{align*} E_{3/2,m,\beta}^*(\tau,z,0) &= E_{3/2}(\tau,z) - \pi i \frac{\sqrt{i}^{b^- - b^+ - 1}}{\sqrt{2m |\Lambda'/\Lambda|}} \sum_{r^2 = 4mn} \tilde L(n,r,\gamma,3/2 + e/2) \\ &= E_{3/2}(\tau,z) - \pi \frac{(-1)^{(1 + b^- - b^+) / 4}}{\sqrt{2m |\Lambda'/\Lambda|}} \sum_{r^2 = 4mn} \tilde L(n,r,\gamma,3/2 + e/2) \end{align*} is holomorphic in $\tau$ and therefore defines a Jacobi form. (On the other hand, the exponent $(n,r) = (0,0)$ occurs in the sum on the right and therefore $E_{3/2,m,\beta}^*(\tau,z,0)$ will generally not have constant term $1 \cdot \mathfrak{e}_0$ and may even vanish identically.) We define $Q_{3/2,m,\beta}(\tau) = E_{3/2,m,\beta}^*(\tau,0,0).$ \\

Note that changing variables $\lambda \mapsto \lambda + \mu$ in the local $L$-series $$L_p(n,r,\gamma,s) = \sum_{\nu=0}^{\infty} p^{-\nu s} \# \Big\{ \mathrm{zeros} \, (v,\lambda) \, \mathrm{of} \, q(v + \lambda \beta - \gamma) + m \lambda^2 - r \lambda + n \, \bmod \, p^{\nu} \Big\}$$ gives the identity $$\tilde L(n,r,\gamma,s) = \tilde L(n+r\mu + m \mu^2, r + 2m \mu, \gamma + \mu \beta)$$ for all $\mu \in \mathbb{Z}$, which may simplify computations.

\begin{ex} Let $\Lambda = \mathbb{Z}^3$ with quadratic form $q(x,y,z) = 2xz + y^2$; then $M_{3/2}(\rho^*)$ is one-dimensional, spanned by \begin{align*} Q_{3/2,1,0}(\tau) &= \Big( \frac{1}{2} + 3q + 6q^2 + 4q^3 + ... \Big) (\mathfrak{e}_{(0,0,0)} - \mathfrak{e}_{(0,0,1/2)} - \mathfrak{e}_{(1/2,0,0)}) \\ &+ \Big( 4q^{3/4} + 12q^{11/4} + ... \Big) (\mathfrak{e}_{(0,1/2,0)} - \mathfrak{e}_{(0,1/2,1/2)} - \mathfrak{e}_{(1/2,1/2,0)}) \\ &+ \Big( - 6q^{1/2} - 12q^{3/2} - 12q^{5/2} - ... \Big) \mathfrak{e}_{(1/2,0,1/2)} \\ &+ \Big( -3q^{1/4} - 12q^{3/4} - 15q^{9/4} - ... \Big) \mathfrak{e}_{(1/2,1/2,1/2)},\end{align*} with constant term $\frac{1}{2} \mathfrak{e}_{(0,0,0)} - \frac{1}{2} \mathfrak{e}_{(0,0,1/2)} - \frac{1}{2} \mathfrak{e}_{(1/2,0,0)}.$ Unlike the case of weight $k \ge 5/2$, there is no way to produce a modular form with constant term $1 \cdot \mathfrak{e}_0$. (Following \cite{Bo}, the theta series in $M_{1/2}(\rho)$ act as obstructions to producing modular forms in $M_{3/2}(\rho^*)$ with arbitrary constant term.)
\end{ex}

\section{Weight $2$}

In weight $k = 2$, the $L$-series term is $$\tilde L(n,r,\gamma,2+e/2+2s) = \begin{cases} \frac{L(2s+1,\chi_{\mathcal{D}})}{\zeta(4s+2)} \prod_{\mathrm{bad}\, p} \frac{1 - p^{e/2 - 1 - 2s}}{1 - p^{e/2 - 2s}} L_p(n,r,\gamma,1+e/2 + 2s) : & n - r^2 / 4m \ne 0; \\ \\ \frac{\zeta(4s+1)}{\zeta(4s+2)} \prod_{\mathrm{bad}\, p} \frac{(1 - p^{e/2 - 1 - 2s})(1 - p^{-1-4s})}{1 - p^{-2-4s}} L_p(n,r,\gamma,1 + e/2 + 2s): & n - r^2 / 4m = 0. \end{cases}$$ Here, $\mathcal{D}$ denotes the discriminant $$\mathcal{D} = (r^2 - 4mn) |\Lambda'/\Lambda| d_{\beta}^2 d_{\gamma}^2 \prod_{\mathrm{bad}\, p} p^2.$$ This $L$-series has a pole in $s=0$ when $n - r^2 / 4m = 0$ or when $\mathcal{D}$ is a square, and in this case the residue there is \begin{align*} &\quad \mathrm{Res} \Big( \tilde L(n,r,\gamma,2+e/2 + 2s), s=0 \Big) \\ &= \frac{3}{\pi^2} \Big[\prod_{\mathrm{bad}\, p} \frac{1 - p^{e/2 - 1}}{1 + p^{-1}} L_p(n,r,\gamma,1 + e/2)\Big] \times \begin{cases} 1 : & n - r^2 / 4m \ne 0; \\ 1/2 : & n - r^2 / 4m = 0. \end{cases} \end{align*} As before, if $L_p(n,r,\gamma,1 + e/2 + 2s)$ has a pole in $s = 0$, (which can only happen in dimension $e = 2$), then we abuse notation and write $$(1 - p^{e/2 - 1}) L_p(n,r,\gamma,1+e/2) = \lim_{s \rightarrow 0} (1 - p^{e/2 - 1 - 2s}) L_p(n,r,\gamma,1+e/2 + 2s).$$ The pole of $\tilde L$ cancels with the zero of $I(3/2,y,n-r^2 / 4m,s)$ at $s=0$, whose derivative there is $$\frac{d}{ds} \Big|_{s=0} I(3/2,y,n-r^2/4m,s) = -16\pi^2 (1+i) y^{-1/2} \beta(\pi |4n - r^2 / m| y),$$ where $\beta(x) = \frac{1}{16\pi } \int_1^{\infty} u^{-3/2} e^{-xu} \, \mathrm{d}u$ (exactly as in the weight $3/2$ Eisenstein series in \cite{W2}). \\

For clarity, we can write $$E_2^*(\tau,z,0) = E_2(\tau,z) + \frac{1}{\sqrt{y}} \sum_{\gamma \in \Lambda'/\Lambda} \sum_{n \in \mathbb{Z} - q(\gamma)} \sum_{r \in \mathbb{Z} - \langle \gamma, \beta \rangle} A(n,r,\gamma) \beta(\pi (r^2 / m - 4n) y) q^n \zeta^r \mathfrak{e}_{\gamma},$$ with constants $$A(n,r,\gamma) = \frac{48 (-1)^{(4 + b^+ - b^-)/4}}{\sqrt{m |\Lambda'/\Lambda|}} \prod_{\mathrm{bad}\, p} \frac{1 - p^{e/2 - 1}}{1 + p^{-1}} L_p(n,r,\gamma,1 + e/2) \times \begin{cases} 1: & r^2 \ne 4mn; \\ 1/2 : & r^2 = 4mn; \end{cases}$$ if $(r^2 - 4mn) |\Lambda'/\Lambda|$ is square and $A(n,r,\gamma) = 0$ otherwise. We see that in general $E_2^*(\tau,0,0)$ is far from being a holomorphic modular form. \\

Instead, we define a family of cusp forms $Q_{2,m,\beta}^*(\tau,s) \in S_2(\rho^*)$ by taking the orthogonal projection of $y^s(E^*_{2,m,\beta}(\tau,0,s) - y^s E_2^*(\tau,s))$ to $S_2(\rho^*)$ with respect to the Petersson scalar product. Explicitly, if $e_1,...,e_n$ are an orthonormal basis of weight-$2$ cusp forms then $$Q_{2,m,\beta}^*(\tau,s) = \sum_{j=1}^n \Big( y^s E_{2,m,\beta}^*(\tau,0,s) - y^s E_2^*(\tau,s), e_j(\tau) \Big) \cdot e_j(\tau).$$ From the definition it is clear that for large enough $\mathrm{Re}[s]$, $Q_{2,m,\beta}^*(\tau,s)$ is the cusp form satisfying $$(f, Q_{2,m,\beta}^*(\tau,s)) = (f, y^s E_{2,m,\beta}^*(\tau,0,s)) = 2 \cdot \Gamma(1+s) \sum_{\lambda = 1}^{\infty} \frac{c(m \lambda^2, \lambda \beta)}{(4\pi m \lambda^2)^{1+s}}$$ for any cusp form $f(\tau) = \sum_{n,\gamma} c(n,\gamma) q^n \mathfrak{e}_{\gamma}.$ \\

\begin{rem} For any $\beta \in \Lambda'/\Lambda$ and $m \in \mathbb{Z} - q(\beta)$, $m > 0$, the Poincar\'e series of weight $2$ is defined by $$P_{2,m,\beta}(\tau) = \sum_{M \in \tilde \Gamma_{\infty} \backslash \tilde \Gamma} \Big( \mathbf{e}(m \tau) \mathfrak{e}_{\beta} \Big) \Big|_{2,\rho^*} M = \frac{1}{2} \sum_{c,d} (c \tau + d)^{-2} \mathbf{e}\Big( m (M \cdot \tau) \Big) \rho^*(M)^{-1} \mathfrak{e}_{\beta},$$ where $c,d$ runs through all pairs of coprime integers and $M \in \tilde \Gamma$ is any element with bottom row $(c,d).$ This series does not converge absolutely, but as shown in  \cite{Me}, $$\lim_{s \rightarrow 0} \sum_{M \in \tilde \Gamma_{\infty} \backslash \tilde \Gamma} \Big( y^s \mathbf{e}(m \tau) \mathfrak{e}_{\beta} \Big) \Big|_{2,\rho^*} M$$ is holomorphic in $\tau$ and therefore $P_{2,m,\beta}(\tau)$ defines a cusp form. The unfolding argument characterizes $P_{2,m,\beta}$ by $$(f,P_{2,m,\beta}) = \frac{c(m,\beta)}{4\pi m} \; \; \mathrm{for} \; \mathrm{any} \; \mathrm{cusp} \; \mathrm{form} \; f(\tau) = \sum_{n,\gamma} c(n,\gamma) q^n \mathfrak{e}_{\gamma}$$ as usual.
\end{rem}

\begin{rem} Writing $Q_{2,m,\beta}^*(\tau,s) = \sum_{\gamma \in \Lambda'/\Lambda} \sum_{n \in \mathbb{Z} - q(\gamma)} b(n,\gamma,s) q^n \mathfrak{e}_{\gamma}$, the fact that $Q_{2,m,\beta}^*(\tau,s) - y^s E_{k,m,\beta}^*(\tau,0,s)$ is orthogonal to all Poincar\'e series implies that \begin{align*} \frac{b(n,\gamma,s)}{4\pi n} &= \Big( Q_{2,m,\beta}^*(\tau,s), P_{2,n,\gamma} \Big) \\ &= \Big( y^s E_{2,m,\beta}^*(\tau,0,s), P_{2,n,\gamma} \Big) \\ &= \int_0^{\infty} c(n,\gamma,y,s) e^{-4\pi n y} y^s \, \mathrm{d}y, \end{align*} where $c(n,\gamma,y,s)$ is the coefficient of $q^n \mathfrak{e}_{\gamma}$ in $E_{2,m,\beta}^*(\tau,0,s).$
\end{rem}

\begin{defn} The \textbf{Poincar\'e square series} of weight $2$ is the quasimodular form $$Q_{2,m,\beta}(\tau) = E_2(\tau) + Q_{2,m,\beta}^*(\tau,0).$$
\end{defn}

It follows from the above remarks that $Q_{2,m,\beta}(\tau)$ differs from the computation of \cite{W1} as follows: we can write $$Q_{2,m,\beta}(\tau) = E_2(\tau,0) + \sum_{\gamma \in \Lambda'/\Lambda} \sum_{\substack{n \in \mathbb{Z} - q(\gamma) \\ n > 0}} b(n,\gamma) q^n \mathfrak{e}_{\gamma},$$ with coefficients \begin{align*} \frac{b(n,\gamma)}{4\pi n} &= \sum_{r \in \mathbb{Z} - \langle \gamma, \beta \rangle} A(n,r,\gamma) \int_0^{\infty} e^{-4\pi n y} \beta(\pi (r^2 / m - 4n) y) y^{-1/2} \, \mathrm{d}y \\ &= \frac{1}{16\pi} \sum_r A(n,r,\gamma) \int_0^{\infty} \int_1^{\infty} u^{-3/2} y^{-1/2} e^{4\pi n y(u-1) - \pi r^2 y u / m} \, \mathrm{d}u \, \mathrm{d}y \\ &= \frac{1}{16\pi} \sum_r A(n,r,\gamma) \int_1^{\infty} u^{-3/2} \Big( (r^2 / m - 4n) u + 4n \Big)^{-1/2} \, \mathrm{d}u \\ &= \frac{1}{32\pi n \sqrt{m}} \sum_r A(n,r,\gamma) \Big( |r| - \sqrt{r^2 - 4mn} \Big), \end{align*} i.e. \begin{align*} b(n,\gamma) &= \frac{1}{8 \sqrt{m}} \sum_{r \in \mathbb{Z} - \langle \gamma, \beta \rangle} A(n,r,\gamma) \Big( |r| - \sqrt{r^2 - 4mn} \Big). \end{align*} %\\ &= -\frac{6 (-1)^{(b^+ - b^-)/4}}{m \sqrt{|\Lambda'/\Lambda|}} \sum_{\substack{r \in \mathbb{Z} - \langle \gamma, \beta \rangle \\ (r^2 - 4mn) |\Lambda'/\Lambda| = \square}} \Big(|r| - \sqrt{r^2 - 4mn} \Big) \prod_{\mathrm{bad}\, p} \frac{1 - p^{e/2 - 1}}{1 + p^{-1}} L_p(n,r,\gamma,1+e/2) \times \begin{cases} 1: & r^2 \ne 4mn; \\ 1/2: & r^2 = 4mn. \end{cases} \end{align*}

When $|\Lambda'/\Lambda|$ is square, it turns out that for fixed $n$ and $\gamma$, the sum above is finite and can be calculated directly. Otherwise, this tends to be a truly infinite series, and we will need some preparation to prove that $b(n,\gamma)$ are rational and to evaluate them with a finite computation.

\section{A Pell-type equation}

The condition $$\mathcal{D} = d_{\beta}^2 d_{\gamma}^2 (r^2 - 4mn)|\Lambda'/\Lambda| \prod_{\mathrm{bad} \, p} p^2 = \square$$ is equivalent to requiring $(a,b) = d_{\gamma} d_{\beta} (\sqrt{|\Lambda'/\Lambda|(r^2 - 4mn)}, r)$ to occur as an integer solution of the Pell-type equation $$a^2 - |\Lambda'/\Lambda| b^2 = -4 |\Lambda'/\Lambda| (d_{\beta}^2 m) (d_{\gamma}^2 n)$$ satisfying the congruence $b \equiv d_{\beta} d_{\gamma} \langle \gamma, \beta \rangle \, \bmod \, d_{\gamma}d_{\beta} \mathbb{Z}.$ We will study such equations in general.

\begin{defn} A \textbf{Pell-type problem} is a problem of the form $$\mathrm{find} \, \mathrm{all} \, \mathrm{integer} \, \mathrm{solutions} \, (a,b) \, \mathrm{of} \, a^2 - Db^2 = -4CD$$ for some $C,D \in \mathbb{N}.$
\end{defn}

The behavior of solutions is quite different depending on whether or not $D$ is square. If $D$ is a square, then the equation can be factored as $$(a - \sqrt{D} b) (a + \sqrt{D} b) = a^2 - Db^2 = -4CD,$$ from which it follows that there are only finitely many solutions and all are bounded by $|a|, \sqrt{D}|b| \le CD + 1.$ \\

Assume from now on that $D$ is nonsquare. In this case, the solutions of the Pell-type problem are closely related to the solutions of the true Pell equation $$a^2 - Db^2 = 1.$$ It follows from Dirichlet's unit theorem that there are infinitely many solutions $(a,b)$ of the Pell equation and all have the form $$a + \sqrt{D} b = \pm \varepsilon_0^n, \; \; n \in \mathbb{Z},$$ where $\varepsilon_0 \in \mathbb{Z}[\sqrt{D}]$ is the \textbf{fundamental solution} $\varepsilon_0 = a + \sqrt{D} b$, which is the minimal solution satisfying $\varepsilon_0 > 1.$ The problem of determining $\varepsilon_0$ is well-studied; see for example \cite{L} for an overview.

\begin{lem} Assume that $D$ is squarefree and let $K = \mathbb{Q}(\sqrt{D})$ with ring of integers $\mathcal{O}_K$. Then the solutions $(a,b)$ of the Pell-type equation $a^2 - Db^2 = -4CD$ are in bijection with elements $\mu \in \mathcal{O}_K$ having norm $C$. 
\end{lem}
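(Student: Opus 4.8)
The plan is to realize both sides through the norm form of $K$ and write down an explicit bijection. Recall that $N(x + y\sqrt{D}) = x^2 - Dy^2$ is the field norm, and that the element $2\sqrt{D} \in \mathcal{O}_K$ has norm $N(2\sqrt{D}) = -4D$. Since $-4CD = N(2\sqrt{D}) \cdot C$, multiplicativity of the norm suggests that multiplication by $2\sqrt{D}$ should carry norm-$C$ elements of $\mathcal{O}_K$ to elements $a + b\sqrt{D}$ of norm $-4CD$, i.e. to solutions of $a^2 - Db^2 = -4CD$. I would therefore define the forward map by $\mu \mapsto 2\sqrt{D}\,\mu = a + b\sqrt{D}$ and the candidate inverse by $a + b\sqrt{D} \mapsto \tfrac{a + b\sqrt{D}}{2\sqrt{D}}$, and then verify that each lands in the claimed target set and that they are mutually inverse.

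For the forward map, writing $\mu = \tfrac{u + v\sqrt{D}}{2}$ with $u = \mathrm{Tr}(\mu) \in \mathbb{Z}$ and $v \in \mathbb{Z}$ (every element of $\mathcal{O}_K$ has this shape), a direct computation gives $2\sqrt{D}\,\mu = Dv + u\sqrt{D}$, so that $a = Dv$ and $b = u$ are integers and $a^2 - Db^2 = -D(u^2 - Dv^2) = -4D\,N(\mu) = -4CD$. Thus the image is genuinely a solution of the Pell-type equation.

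The inverse map requires more care, and this is exactly where squarefreeness of $D$ enters. Given a solution $(a,b)$, from $a^2 = D(b^2 - 4C)$ we read off $D \mid a^2$; since $D$ is squarefree this forces $D \mid a$, so $v := a/D$ is an integer, and with $u := b$ one has $u^2 - Dv^2 = 4C$. Setting $\mu = \tfrac{u + v\sqrt{D}}{2}$, I would verify membership in $\mathcal{O}_K$ not by chasing the congruence conditions defining $\mathcal{O}_K$ separately in the cases $D \equiv 1$ and $D \equiv 2,3 \pmod 4$, but through the cleaner criterion that an element of $K$ lies in $\mathcal{O}_K$ precisely when its trace and norm are rational integers: here $\mathrm{Tr}(\mu) = u \in \mathbb{Z}$ and $N(\mu) = \tfrac{u^2 - Dv^2}{4} = C \in \mathbb{Z}$, so $\mu \in \mathcal{O}_K$ with $N(\mu) = C$ as required.

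Finally I would check that the two maps compose to the identity in both orders, which is immediate: the forward map sends $\tfrac{u+v\sqrt{D}}{2}$ to $(Dv,\,u)$, the inverse sends $(a,b)$ to $\tfrac{b + (a/D)\sqrt{D}}{2}$, and substituting one into the other recovers the original data. I expect the only genuine obstacle to be the integrality of the inverse map, namely guaranteeing both that $a/D \in \mathbb{Z}$ and that the resulting $\mu$ lands in the \emph{full} ring $\mathcal{O}_K$ rather than merely in $\mathbb{Q}(\sqrt{D})$. Squarefreeness of $D$ handles the former and the trace--norm characterization of algebraic integers handles the latter, so no case analysis on $D \bmod 4$ is needed.
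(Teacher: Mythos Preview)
Your proof is correct and follows essentially the same route as the paper: the same bijection $\mu \leftrightarrow (a,b)$ via $a + b\sqrt{D} = 2\sqrt{D}\,\mu$, and the same verification that $\mu \in \mathcal{O}_K$ by checking that its trace $b$ and norm $C$ are rational integers. The only difference is that your intermediate step deducing $D \mid a$ from squarefreeness is actually unnecessary once you invoke the trace--norm criterion, which is why the paper's argument is slightly shorter.
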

\begin{proof} Let $(a,b)$ be any solution of the Pell-type equation and define $\mu = \frac{a + \sqrt{D} b}{2 \sqrt{D}}.$ This is an algebraic integer because its trace $\mu + \overline{\mu} = b$ and norm $\mu \overline{\mu} = C$ are both integers. Conversely, given any algebraic integer $\mu \in \mathcal{O}_K$ of norm $C$, we can define $(a,b)$ by $a + \sqrt{D} b = 2 \sqrt{D} \mu$.
\end{proof}

\begin{lem} Assume that $D$ is squarefree. Then there are finitely many elements $\mu_1,...,\mu_n \in \mathcal{O}_K$, all satisfying $0 \le \mathrm{Tr}_{K/\mathbb{Q}}(\mu_i) \le 2 \sqrt{C \varepsilon_1}$, such that $$\Big\{ \mu \in \mathcal{O}_K: \; N_{K/\mathbb{Q}}(\mu) = C \Big\} = \bigcup_{i=1}^n \mu_i \cdot \mathcal{O}_K^{\times,1}.$$
\end{lem}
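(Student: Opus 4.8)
The plan is to regard $\mathcal{O}_K^{\times,1}$, the group of units of norm $1$, as acting on the set $\{\mu \in \mathcal{O}_K : N_{K/\mathbb{Q}}(\mu) = C\}$ by multiplication --- this preserves the norm because $C > 0$ --- and to prove the two halves of the statement separately: first that every orbit has a representative whose trace lies in $[0, 2\sqrt{C\varepsilon_1}]$, and then that there are only finitely many orbits. Here I write $\varepsilon_1 > 1$ for the fundamental norm-$1$ unit, so that $\mathcal{O}_K^{\times,1} = \langle -1, \varepsilon_1 \rangle = \{\pm \varepsilon_1^n : n \in \mathbb{Z}\}$; its existence for the real field $K = \mathbb{Q}(\sqrt{D})$ is exactly Dirichlet's unit theorem, equivalently the infinitude of Pell solutions recorded above.

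For the trace reduction I would pass to the two real embeddings of $K$, writing $\sigma, \sigma'$ for the maps induced by $\sqrt{D} \mapsto \pm\sqrt{D}$, so that $N_{K/\mathbb{Q}}(\mu) = \sigma(\mu)\sigma'(\mu)$ and $\mathrm{Tr}_{K/\mathbb{Q}}(\mu) = \sigma(\mu) + \sigma'(\mu)$. Fix $\mu$ with $N_{K/\mathbb{Q}}(\mu) = C > 0$; then $\sigma(\mu)$ and $\sigma'(\mu)$ share a sign, and after multiplying by $-1 \in \mathcal{O}_K^{\times,1}$ I may take both positive. Since $\sigma'(\varepsilon_1) = \varepsilon_1^{-1}$, the element $\mu\varepsilon_1^n$ has embeddings $x := \sigma(\mu)\varepsilon_1^n$ and $\sigma'(\mu)\varepsilon_1^{-n} = C/x$, so its trace is $x + C/x$. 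As $n$ ranges over $\mathbb{Z}$ the positive number $x$ runs through a set with consecutive ratios $\varepsilon_1$, and the window $[\sqrt{C/\varepsilon_1}, \sqrt{C\varepsilon_1})$ also has endpoint ratio $\varepsilon_1$, so I can choose $n$ with $\sqrt{C/\varepsilon_1} \le x < \sqrt{C\varepsilon_1}$. On this window $x + C/x$ (which is minimized at $x = \sqrt{C}$ and invariant under $x \mapsto C/x$) is largest at the endpoints, giving
$$0 < \mathrm{Tr}_{K/\mathbb{Q}}(\mu\varepsilon_1^n) = x + \frac{C}{x} \le \sqrt{C\varepsilon_1} + \sqrt{C/\varepsilon_1} \le 2\sqrt{C\varepsilon_1},$$
where the last step uses $\varepsilon_1 > 1$. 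Thus every $\mathcal{O}_K^{\times,1}$-orbit contains a representative of trace in $[0, 2\sqrt{C\varepsilon_1}]$.

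To conclude, I would note that any $\mu \in \mathcal{O}_K$ with $N_{K/\mathbb{Q}}(\mu) = C$ is a root of $X^2 - \mathrm{Tr}_{K/\mathbb{Q}}(\mu)\, X + C$, hence is determined by its trace up to two choices; since the trace is a rational integer and only finitely many integers lie in $[0, 2\sqrt{C\varepsilon_1}]$, there are finitely many norm-$C$ elements of bounded trace. Selecting one $\mu_i$ in each orbit among these yields the finite list with $\{\mu : N_{K/\mathbb{Q}}(\mu) = C\} = \bigcup_{i=1}^n \mu_i \cdot \mathcal{O}_K^{\times,1}$. The main obstacle, and the only step needing genuine care, is the displayed estimate --- i.e.\ checking that a single power of $\varepsilon_1$ already forces the trace into the stated window; the existence of $\varepsilon_1$ and the trace--norm finiteness are then entirely routine.
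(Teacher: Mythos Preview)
Your proof is correct and follows essentially the same approach as the paper's: both arguments use a power of $\varepsilon_1$ to place one real embedding of $\mu$ in a window of multiplicative length $\varepsilon_1$ centered at $\sqrt{C}$, then bound the trace and invoke the minimal polynomial to get finiteness. The paper phrases the reduction step via $|\log(\varepsilon_1^n\mu) - \log\sqrt{C}| \le \tfrac{1}{2}\log\varepsilon_1$, which is exactly your condition $x \in [\sqrt{C/\varepsilon_1}, \sqrt{C\varepsilon_1})$ written logarithmically.
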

Here, $\varepsilon_1$ is the fundamental solution to $N_{K/\mathbb{Q}}(\varepsilon_1) = 1$. In other words $\varepsilon_1$ is either the fundamental unit or its square if the fundamental unit has norm $-1$. Also, $$\mathcal{O}_K^{\times,1} = \{\varepsilon \in \mathcal{O}_K^{\times}: \; N_{K/\mathbb{Q}}(\varepsilon) = 1\}.$$
\begin{proof} Suppose $\mu$ is any solution of $N_{K/\mathbb{Q}}(\mu) = C$, and choose $n \in \mathbb{Z}$ such that $$|\log(\varepsilon_1^n \mu) - \log(\sqrt{C})|$$ is minimal. Then it follows that $$|\log(\varepsilon_1^n \mu) - \log(\sqrt{C})| \le \frac{1}{2} \log(\varepsilon_1).$$ In particular, $\varepsilon_1^n \mu \le \sqrt{C \varepsilon_1}$ and $\varepsilon_1^{-n} \mu^{-1} \le \sqrt{\varepsilon_1 / C}$. It follows that $$\Big| \mathrm{Tr}_{K/\mathbb{Q}}(\varepsilon_1^n \mu) \Big| = \Big| \varepsilon_1^n + C \varepsilon_1^{-n} \mu^{-1} \Big| \le 2 \sqrt{C \varepsilon_1}.$$ By replacing $\mu$ by $-\mu$ we may assume that $\mathrm{Tr}_{K/\mathbb{Q}}(\varepsilon_1^n \mu) \ge 0.$ \\

In particular, $\mu$ lies in the same $\mathcal{O}_K^{\times,1}$-orbit as a root of one of finitely many polynomials $X^2 + \lambda X + C$ with $0 \le \lambda \le \lfloor 2 \sqrt{C \varepsilon_1} \rfloor$, which also shows that there are finitely many orbits.
\end{proof}

\begin{ex} Consider the Pell-type equation $a^2 - 33b^2 = -528$ with $D = 33$ and $C = 4$. There are three orbits of elements $\mu \in \mathcal{O}_K = \mathbb{Z}[(1 + \sqrt{33}) / 2]$ with norm $4$, represented by $$\mu = 2, \; \mu = \frac{7 \pm \sqrt{33}}{2},$$ having traces $4$ and $7$. The bound in this case is $2 \sqrt{C \varepsilon_1} \approx 28$. Note that elements $\mu$ that are conjugate by $\mathrm{Gal}(K/\mathbb{Q})$ result in the same solutions to the Pell equation.
\end{ex}

\begin{rem} Let $b_0,n \in \mathbb{N}$. Reducing modulo $n$ shows that the set of solutions $(a,b)$ to $$a^2 - Db^2 = -4CD, \; \; b \equiv b_0 \, \bmod \, n$$ is also in bijection via $(a,b) \mapsto \mu = \frac{a + \sqrt{D} b}{2 \sqrt{D}}$ to a union of finitely many orbits (possibly none): $$\bigcup_{i=1}^n \mu_i \cdot \langle \varepsilon_{\mu_i} \rangle,$$ where the ``congruent fundamental solution" $\varepsilon_{\mu_i}$ is the minimal power of the fundamental solution $\varepsilon_1$ such that $\mathrm{Tr}_{K/\mathbb{Q}}(\mu_i (1 - \varepsilon_{\mu_i})) \equiv 0 \, (\bmod\, n).$ \\

 When $D$ is not squarefree, we can pull out the largest square factor of $D$ to reduce the equation $$a^2 - Db^2 = -4CD$$ to a squarefree Pell-type equation with congruence condition.
\end{rem}

\begin{lem} Fix $\gamma \in \Lambda'/\Lambda$ and $n \in \mathbb{Z} - q(\gamma)$, $n > 0$. Then the value of $$A(n,r,\gamma) \times \begin{cases} 1 : & r^2 \ne 4mn; \\ 2: & r^2 = 4mn; \end{cases}$$ depends only on the orbit of $d_{\gamma} d_{\beta}\sqrt{|\Lambda'/\Lambda|} (r + \sqrt{r^2 - 4mn})$ as a solution of the Pell-type equation $$a^2 - Db^2 = -4CD, \; \; D = |\Lambda'/\Lambda|, \; C = d_{\beta}^2 d_{\gamma}^2 mn,$$ with congruence condition $b \equiv d_{\beta} d_{\gamma} \langle \gamma, \beta \rangle$ mod $d_{\gamma} d_{\beta} \mathbb{Z}.$
\end{lem}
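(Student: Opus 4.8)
The plan is to reduce the statement to an invariance property of the local Euler factors and then to exploit the fact that the orbit is generated by a \emph{global} unit. First I would strip off the prefactor $\frac{48(-1)^{(4+b^+-b^-)/4}}{\sqrt{m|\Lambda'/\Lambda|}}$, which does not involve $r$, and absorb the extra factor of $2$ in the statement: its only effect is to cancel the $1/2$ built into $A(n,r,\gamma)$ when $r^2=4mn$, so that the quantity under consideration is uniformly $\prod_{\mathrm{bad}\,p}\frac{1-p^{e/2-1}}{1+p^{-1}}L_p(n,r,\gamma,1+e/2)$ (and $0$ when $\mathcal{D}$ is not a square). Since every point of a fixed orbit is an integer solution $(a,b)$ of $a^2-Db^2=-4CD$, and the nonvanishing condition $\mathcal{D}=\square$ is equivalent to $a\in\mathbb{Z}$, the dichotomy ``$A=0$ or $A\neq 0$'' is itself orbit-invariant, and on the nonvanishing orbits $\chi_{\mathcal{D}}$ is principal. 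Consequently the good-prime factors take the $r$-independent value $\frac{1+p^{-1}}{1-p^{e/2-1}}$, and including a good prime in the product contributes a factor of $1$; I may therefore regard the product as ranging over all primes and reduce the lemma to the claim that $\prod_p \frac{1-p^{e/2-1}}{1+p^{-1}}L_p(n,r,\gamma,1+e/2)$ is constant on orbits. Note also that the norm $N_{K/\mathbb{Q}}(\mu)=C=d_\beta^2 d_\gamma^2 mn$ is an orbit invariant, so $n$ is fixed as $r$ varies over the orbit.

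Next I would make the orbit action explicit and identify what is preserved. Writing $\mu=\tfrac{d_\gamma d_\beta}{2}\bigl(r+\sqrt{r^2-4mn}\bigr)$, so that $2\sqrt{D}\,\mu$ is the element whose orbit appears in the statement, the orbit action is $\mu\mapsto \mu\,\varepsilon_\mu$ by the congruent fundamental solution $\varepsilon_\mu\in\mathcal{O}_K$, a unit of norm $1$; in coordinates it is $r\mapsto r'=tr+u\sqrt{D(r^2-4mn)}$ when $\varepsilon_\mu=t+u\sqrt D$, and the congruence condition in the statement is exactly what forces $r'\in\mathbb{Z}-\langle\gamma,\beta\rangle$, i.e.\ keeps the coset of $\gamma$ fixed while $N_{K/\mathbb{Q}}(\mu)$ fixes $n$. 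The crucial point is that $\varepsilon_\mu$ is a \emph{global} unit, so $v_{\mathfrak p}(\mu\varepsilon_\mu)=v_{\mathfrak p}(\mu)$ for every finite prime $\mathfrak p$ of $K$. Hence the local ideal $\mu\,(\mathcal{O}_K\otimes\mathbb{Z}_p)$, the valuation $v_p(C)=v_p(N_{K/\mathbb{Q}}\mu)$, and the image $\gamma_p$ of $\gamma$ in the $p$-part of $\Lambda'/\Lambda$ are all orbit-invariant at every rational prime $p$. Thus every piece of local data on which $L_p(n,r,\gamma,s)$ could plausibly depend, apart from $r$ itself, is orbit-invariant, and the lemma reduces to showing that $L_p$ is a function of precisely these invariants.

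The heart of the matter, and the step I expect to be the main obstacle, is to prove this last assertion at the bad primes. Recall $L_p$ assembles the counts $\mathbf{N}(p^\nu)$ of zeros of $P_{n,r,\gamma}(v,\lambda)=q(v+\lambda\beta-\gamma)+m\lambda^2-r\lambda+n$ modulo $p^\nu$. My plan is to evaluate these local densities using the computation of \cite{W2}, together with the translation identity $\tilde L(n,r,\gamma,s)=\tilde L(n+r\mu+m\mu^2,r+2m\mu,\gamma+\mu\beta)$ read $p$-adically for $\mu\in\mathbb{Z}_p$, in order to express $L_p(n,r,\gamma,1+e/2)$ purely in terms of $\{v_{\mathfrak p}(\mu)\}_{\mathfrak p\mid p}$, the square class governing $\chi_{\mathcal{D}}$, $v_p(C)$, and $\gamma_p$; at good primes this just recovers the $r$-independent value above. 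The delicate case is $p=2$ and the odd primes dividing $|\Lambda'/\Lambda|$, $m d_\beta^2$, or the relevant denominators, where $L_p$ is not governed by $\chi_{\mathcal{D}}$ alone and where the individual factors, and even the set of bad primes, genuinely vary with $r$. There I must show that the multivariable count $\mathbf{N}(p^\nu)$ depends only on the orbit-invariant local ideal data — equivalently, that multiplication by the norm-one unit $\varepsilon_\mu$ lifts to a $\mathbb{Z}_p$-isometry of $P_{n,r,\gamma}$ carrying the solutions for $r$ bijectively to those for $r'$ while respecting both the quadratic form and the coset $\gamma$ — and handle the notoriously subtle $2$-adic densities. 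The boundary term $r^2=4mn$, corresponding to $a=0$ (the unique fixed point of $\mu\mapsto\overline\mu$ in the orbit), is then weighted correctly by the factor of $2$, which makes the invariant quantity uniform across that case.
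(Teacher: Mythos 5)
Your reductions are sound as far as they go: stripping the $r$-independent prefactor, observing that the condition $\mathcal{D}=\square$ and the value of $n$ are orbit invariants, and noting that the good-prime Euler factors equal $\tfrac{1+p^{-1}}{1-p^{e/2-1}}$ on the nonvanishing orbits so that only the bad primes matter. But you have correctly identified, and then not supplied, the entire content of the lemma: the claim that the bad-prime local densities (or at least their product) are constant along an orbit. Your proposed mechanism --- that multiplication by the norm-one unit $\varepsilon_\mu$ ``lifts to a $\mathbb{Z}_p$-isometry of $P_{n,r,\gamma}$ carrying the solutions for $r$ bijectively to those for $r'$'' --- is asserted, not constructed, and it is far from clear that such an isometry exists: the unit acts on $\mathcal{O}_K=\mathbb{Z}[\sqrt{D}]$, while $\mathbf{N}(p^{\nu})$ counts points of a quadratic polynomial in $e+1$ variables over $\mathbb{Z}/p^{\nu}$, and there is no natural action of $\varepsilon_\mu$ on those variables. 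The phrase ``handle the notoriously subtle $2$-adic densities'' is a placeholder for the hardest case. As written, the proposal is a plan for a proof, not a proof.

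The paper avoids local computations altogether. For $\beta=0$ the coefficient of $q^n\zeta^r\mathfrak{e}_\gamma$ in $E^*_{2,m,0}$ equals, via the theta decomposition, the coefficient of $q^{n-r^2/4m}\mathfrak{e}_{(\gamma,r/2m)}$ in the real-analytic weight-$3/2$ Eisenstein series for $\tilde q(v,\lambda)=q(v)+m\lambda^2$; hence the $A(n,r,\gamma)$ are, up to a constant, Fourier coefficients of its shadow, a holomorphic modular form of weight $1/2$. By Serre--Stark and Skoruppa's refinement, such a form is a linear combination of unary theta series $\vartheta_{\ell,b}$, whose coefficient at $q^{n'}$ (halved at $n'=0$, which is exactly your boundary case $r^2=4mn$) depends only on whether $\ell n'$ is a square and on $\sqrt{4\ell n'}$ modulo $2\ell$. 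A short direct computation shows multiplication by the congruent fundamental solution preserves precisely this square-class and congruence data of $n-r^2/4m$, and the general case reduces to $\beta=0$ via $U_\beta\Phi(\tau,z)=\Phi(\tau,d_\beta z)$ --- a reduction your proposal also does not address, and which matters because for $\beta\neq 0$ the solutions $\pm r+\sqrt{r^2-4mn}$ carry different congruence conditions. If you want to salvage your local approach, you would need to prove the orbit-invariance of the bad-prime densities directly, which is essentially a harder theorem than the one being proved; the modularity of the shadow is the input that makes the lemma tractable.
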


\begin{proof} Assume first that $\beta = 0$ and abbreviate $D = |\Lambda'/\Lambda|$. Multiplying $r + \sqrt{r^2 - 4mn}$ by the congruent fundamental solution $\varepsilon = a + b \sqrt{D}$ replaces $r$ by $$r \frac{\varepsilon + \varepsilon^{-1}}{2} + \sqrt{r^2 - 4mn} \frac{\varepsilon - \varepsilon^{-1}}{2} = ar + b \sqrt{D (r^2 - 4mn)},$$ and $r^2 - 4mn$ by $$(r^2 - 4mn) + 2Db^2 (r^2 - 4mn) + 4mn Db^2 + 2abr \sqrt{D(r^2 - 4mn)},$$ which is congruent to $r^2 - 4mn$ modulo the largest modulus whose square divides $D$. \\

Since $\beta = 0$, it follows that $E_{2,m,\beta}^*(\tau,z,s)$ arises from a weight-$3/2$ real-analytic Maass form (here the Eisenstein series $E_{3/2}^*(\tau,s)$) for the quadratic form $\tilde q(v,\lambda) = q(v) + m \lambda^2$ through the theta decomposition; in other words, the coefficient of $q^n \zeta^r \mathfrak{e}_{\gamma}$ in $E_{2,m,\beta}^*(\tau,z,s)$ equals the coefficient of $q^{n - r^2 / 4m} \mathfrak{e}_{(\gamma, r/2m)}$ in $E_{3/2}^*(\tau,s).$ In particular, this equality also holds for the real-analytic parts. The coefficients $A(n,r,\gamma)$ in the real-analytic part of $E_{3/2}^*(\tau,0)$ occur (up to a constant factor) as the coefficients of its shadow, which is a modular form of weight $1/2$ for the quadratic form $-\tilde q$. Using Skoruppa's strengthening of the Serre-Stark basis theorem (\cite{Sk}, Satz 5.1; see also (3.5) of \cite{BEF}), it is known that for any Weil representation $\rho : \tilde \Gamma \rightarrow \mathrm{Aut} \, \mathbb{C}[\Lambda'/\Lambda]$, $M_{1/2}(\rho)$ is spanned by modular forms that are $\mathbb{C}[\Lambda'/\Lambda]$-linear combinations of the theta series $$\vartheta_{\ell,b} = \sum_{\substack{v \in \mathbb{Z} \\ v \equiv b \, (2\ell)}} \mathbf{e}\Big( \frac{v^2}{4 \ell} \tau \Big), \; \; b \in \mathbb{Z},$$ where $\ell$ runs through divisors of $4N$ for which $N/\ell$ is squarefree (where $N$ is the level of the discriminant form $\Lambda'/\Lambda$), in which the Fourier coefficient of $q^n$ (multiplied by $1/2$ if $n = 0$) depends only on whether $\ell n$ is square and if so on the remainder of $\sqrt{4 \ell n}$ modulo $2\ell$. The previous paragraph implies this congruence for $n - r^2 / 4m$ for all $r + \sqrt{r^2 - 4mn}$ in the same orbit. \\

For general $\beta$, we can embed the space of Jacobi forms for $\rho_{\beta}^*$ of index $m$ as ``old" Jacobi forms of index $m d_{\beta}^2$ for the trivial action of the Heisenberg group via the Hecke-type operator $$U_{\beta} \Phi(\tau,z) = \Phi(\tau,d_{\beta} z)$$ and apply the argument for $\beta = 0.$
\end{proof}

\begin{prop} The Poincar\'e square series $Q_{2,m,\beta}(\tau)$ has rational Fourier coefficients.
\end{prop}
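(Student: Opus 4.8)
The plan is to start from the explicit coefficient formula $b(n,\gamma)=\frac{1}{8\sqrt m}\sum_{r}A(n,r,\gamma)\bigl(|r|-\sqrt{r^2-4mn}\bigr)$ derived above, and to prove rationality by organizing this (generally infinite) sum according to the orbit structure of the associated Pell-type equation. Only those $r$ with $(r^2-4mn)|\Lambda'/\Lambda|$ a nonzero square contribute, and these correspond exactly to the solutions $(a,b)$ with $a\ge 0$ of $a^2-Db^2=-4CD$, with $D=|\Lambda'/\Lambda|$ and $C=d_\beta^2 d_\gamma^2 mn$, lying in the congruence class attached to $\gamma$. First I would dispose of the easy reductions: when $D$ is a square the sum is finite and each $\sqrt{r^2-4mn}$ is itself rational, so the claim is immediate; the non-squarefree case reduces to squarefree $D$ by pulling out the largest square factor as remarked above; and a general $\beta$ is handled exactly as in the preceding Lemma via the operator $U_\beta$. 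So assume $D$ squarefree and nonsquare. By the Pell lemmas the contributing solutions fall into finitely many orbits under the congruent fundamental solution $\varepsilon$, and by the preceding Lemma the quantity $A(n,r,\gamma)$ (with the boundary weight $1$ or $2$ at $r^2=4mn$ absorbed) is constant on each orbit; by the explicit formulas for the local factors $L_p$ given above this constant is a rational number. It then remains to treat, orbit by orbit, the sub-sum $\sum_{r\in O}c_r\bigl(|r|-\sqrt{r^2-4mn}\bigr)$, where $c_r=\tfrac12$ if $r^2=4mn$ and $c_r=1$ otherwise.

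The second step is to evaluate each orbit sum in closed form. Writing $\mu=\frac{a+b\sqrt D}{2\sqrt D}\in\mathcal O_K$, so that $N_{K/\mathbb Q}(\mu)=C$ with conjugates $\mu,\bar\mu$, one has $|r|-\sqrt{r^2-4mn}=\frac{2}{d_\gamma d_\beta}\min(\mu,\bar\mu)$, and the elements of $O$ are $\mu=\varepsilon^{k}\mu_O$, $k\in\mathbb Z$. Since $N(\varepsilon)=1$ we have $\bar\varepsilon=\varepsilon^{-1}$, so the smaller conjugate $\min(\varepsilon^{k}\mu_O,\varepsilon^{-k}\bar\mu_O)$ decays like $\varepsilon^{-|k|}$ and the series converges; summing the two geometric tails on either side of the crossover index produces a closed form $z_O=\frac{\bar\eta+\bar\varepsilon\,\eta}{1-\bar\varepsilon}\in K$, where $\eta$ is the minimal solution in $O$ with $a\ge 0$. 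The boundary case $r^2=4mn$, i.e. an orbit containing an $a=0$ solution, is the degenerate self-paired crossover, and this is precisely where the weight $c_r=\tfrac12$ enters.

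The heart of the argument, and the step I expect to be the main obstacle, is to show that $z_O$ lies in $\sqrt D\cdot\mathbb Q$ rather than merely in $K$. I would prove this by a trace computation: using $\varepsilon\bar\varepsilon=1$ one finds $\mathrm{Tr}_{K/\mathbb Q}(z_O)=z_O+\bar z_O=0$, the cross terms cancelling in pairs over the common denominator $(1-\varepsilon)(1-\bar\varepsilon)=2-\mathrm{Tr}(\varepsilon)$. Hence $z_O\in\sqrt D\,\mathbb Q$, and the factor $\sqrt D=\sqrt{|\Lambda'/\Lambda|}$ cancels the $1/\sqrt{|\Lambda'/\Lambda|}$ appearing in the prefactor of $A(n,r,\gamma)$; since the remaining prefactor is rational and there are only finitely many orbits, $b(n,\gamma)\in\mathbb Q$. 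The delicate points are the correct bookkeeping of the two-to-one correspondence between orbit elements and $r$-values, which is what forces the weight $\tfrac12$ at $r^2=4mn$ and is exactly what makes the trace vanish, together with the justification of term-by-term convergence and rearrangement of the geometric series; rationality of the local factors $L_p$ follows from their explicit evaluation above.
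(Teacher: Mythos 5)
Your proposal follows essentially the same route as the paper: reduce to squarefree nonsquare $D$, group the $r$ with $(r^2-4mn)|\Lambda'/\Lambda|$ square into finitely many Pell orbits on which $A(n,r,\gamma)$ is constant by the preceding lemma, sum the resulting geometric series in the fundamental solution $\varepsilon$, and observe that the closed form is antisymmetric under Galois conjugation (your trace computation is exactly the paper's rewriting of the orbit sum as $\frac{1}{N_{K/\mathbb{Q}}(1-\varepsilon)}\left(\mu-\overline{\mu}+\overline{\mu\varepsilon}-\mu\varepsilon\right)$), so that the resulting factor of $\sqrt{|\Lambda'/\Lambda|}$ cancels against the prefactor of $A(n,r,\gamma)$. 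The only divergence is your treatment of general $\beta$: the paper does not reinvoke $U_\beta$ at this stage but instead symmetrizes, writing $b(n,\gamma)=\tfrac{1}{2}\bigl(b(n,\gamma)+b(n,-\gamma)\bigr)$ to restore the $r\mapsto -r$ symmetry broken by the congruence condition $b\equiv d_\beta d_\gamma\langle\gamma,\beta\rangle$, though either repair should work.
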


\begin{proof} The expression for the coefficients of $E_2(\tau,0)$ in \cite{W1} consists of special values of Dirichlet $L$-functions and finitely many local $L$-series, and these remain rational in weight $k=2$. Therefore, we need to show that the correction terms $$b(n,\gamma) = \frac{1}{8 \sqrt{m}} \sum_{r \in \mathbb{Z} - \langle \gamma, \beta \rangle} A(n,r,\gamma) \Big( |r| - \sqrt{r^2 - 4mn} \Big)$$ are rational. \\

This is easy to see when $|\Lambda'/\Lambda|$ is square, since $b(n,\gamma)$ is a finite sum of rational numbers. Assume that $|\Lambda'/\Lambda|$ is not square. \\

Suppose first that $\beta = 0$. By lemma 17, we can write $$\sum_{r \in \mathbb{Z}} A(n,r,\gamma) \Big( |r| - \sqrt{r^2 - 4mn} \Big) = \sum_{i=1}^N A( n, r, \gamma) \sum_r \Big( |r| - \sqrt{r^2 - 4mn} \Big),$$ where for each $i$, the sum over $r$ is taken over solutions $$(a,b) = d_{\gamma} \Big( \sqrt{|\Lambda'/\Lambda|(r^2 - 4mn)}, r\Big)$$ of the Pell equation with congruence condition coming from the orbit of an element $\mu_i$ of norm $C$ and minimal trace as in  lemma 14.  These solutions are given by $$r + \sqrt{r^2 - 4mn} = \pm \frac{2 \sqrt{|\Lambda'/\Lambda|}}{d_{\gamma}} \mu_i \varepsilon_i^n,$$ which runs through the solutions $r$ twice if $\overline{\mu_i} / \mu_i \in \mathcal{O}_K$ and once otherwise. The minimality of $\mathrm{Tr}_{K/\mathbb{Q}}(\mu_i)$ implies that the terms in the series are $$|r| - \sqrt{r^2 - 4mn} \in \{2\mu, 2\mu \varepsilon^{-n}, 2\overline{\mu} \varepsilon^{-n}: \; n \in \mathbb{N}\},$$ and \begin{align*} \sum_r \Big( |r| - \sqrt{r^2 - 4mn} \Big) &= \Big( \frac{\mu}{1 - \varepsilon^{-1}} + \frac{\overline{\mu} \varepsilon^{-1}}{1 - \varepsilon^{-1}} \Big) \times \begin{cases} 1 : & \overline{\mu} / \mu \in \mathcal{O}_K; \\ 2: & \mathrm{otherwise} \end{cases} \\ &= \frac{1}{N_{K/\mathbb{Q}}(1 - \varepsilon)} \Big( \mu - \overline{\mu} + \overline{\mu \varepsilon} - \mu \varepsilon \Big) \times \begin{cases} 1 : & \overline{\mu} / \mu \in \mathcal{O}_K; \\ 2: & \mathrm{otherwise}; \end{cases}  \end{align*} and we see that $\frac{1}{\sqrt{|\Lambda'/\Lambda|}} \sum_r \Big(|r| - \sqrt{r^2 - 4mn} \Big)$ is rational. Since $$A(n,r,\gamma) = \frac{1}{\sqrt{m |\Lambda'/\Lambda|}} \cdot \Big( \text{rational number} \Big),$$ we see that $b(n,\gamma)$ is rational. \\

The argument for general $\beta$ is essentially the same but slightly messier because $r + \sqrt{r^2 - 4mn}$ and $-r + \sqrt{r^2 - 4mn}$ generally occur as solutions of the Pell equation with different congruence conditions. In this case we can use the identity $b(n,\gamma) = b(n,-\gamma) = \frac{b(n,\gamma) + b(n,-\gamma)}{2}$ and consider both congruence conditions at once.
\end{proof}

The formula above has been implemented in SAGE and is available on the author's institutional webpage.

\section{Example: the class number relation}

In the simplest case of a unimodular lattice $\Lambda$ and index $m = 1$, the fact that $$Q_{2,1,0}(\tau) = E_2(\tau) = 1 - 24 \sum_{n=1}^{\infty} \sigma_1(n)q^n = 1 - 24 q - 72q^2 - 96q^3 - ...$$ (since the difference $Q_{2,1,0} - E_2$ is a scalar-valued cusp forms of weight $2$ and level $1$ so it vanishes) implies the Kronecker-Hurwitz class number relations. We explain this here. \\

The real-analytic Jacobi form $E_{2,1,0}^*(\tau,z,0)$ arises from the real-analytic correction of Zagier's Eisenstein series (in the form of example 15 of \cite{W2}), $$E_{3/2}^*(\tau,0) = 1 - 12 \sum_{n=1}^{\infty} H(n) q^{n/4} \mathfrak{e}_{n/2} - \frac{24}{\sqrt{y}} \sum_{n=-\infty}^{\infty} \beta(\pi y n^2) q^{-n^2 / 4} \mathfrak{e}_{n/2}$$ through the theta decomposition, where $H(n)$ is the Hurwitz class number of $n$. Therefore, $$E_{2,1,0}^*(\tau,z,0) = 1 -12 \sum_{n=1}^{\infty} \sum_{r=-\infty}^{\infty} H(4n - r^2) q^n \zeta^r + \frac{1}{\sqrt{y}} \sum_{n=-\infty}^{\infty} \sum_{r^2 - 4n = \square}A(n,r) \beta(\pi y (r^2 - 4n)) q^n \zeta^r$$ with constants $$A(n,r) = \begin{cases} -24: & r^2 - 4n = 0; \\ -48: & r^2 - 4n \ne 0. \end{cases}$$

It follows that \begin{align*} Q_{2,1,0}(\tau) &= 1 - 12 \sum_{n=1}^{\infty} \sum_{r=-\infty}^{\infty} H(4n - r^2) q^n + \frac{1}{8} \sum_{n=1}^{\infty} \sum_{r^2 - 4n = \square} A(n,r) \Big( |r| - \sqrt{r^2 - 4n} \Big) q^n \\ &= 1 - 12 \sum_{n=1}^{\infty} \sum_{r=-\infty}^{\infty} H(4n- r^2) q^n - 6 \sum_{n=1}^{\infty} \sum_{r^2 - 4n = \square} \Big( |r| - \sqrt{r^2 - 4n} \Big) q^n + 12 \sum_{n=1}^{\infty} n q^{n^2}. \end{align*} The identity $Q_{2,1,0} = E_2$ implies that for all $n \in \mathbb{N}$, $$\sum_{r=-\infty}^{\infty} H(4n-r^2) = 2 \sigma_1(n) - \frac{1}{2} \sum_{\substack{r \in \mathbb{Z} \\ r^2 - 4n = \square}} \Big( |r| - \sqrt{r^2 - 4n} \Big) + \begin{cases} \sqrt{n} : & n = \square; \\ 0: & \mathrm{otherwise}. \end{cases}$$ Here, $\frac{1}{2} \Big(|r| - \sqrt{r^2 - 4n}\Big)$ takes exactly the values $\min(d,n/d)$ as $d$ runs through divisors of $n$ (but counts $\sqrt{n}$ twice if $n$ is square); so this can be rearranged to $$\sum_{r = -\infty}^{\infty} H(4n-r^2) = 2 \sigma_1(n) - \sum_{d | n} \mathrm{min}(d, n/d).$$

\begin{rem} Mertens \cite{M} has given other proofs of this and similar class number relations using mock modular forms. It seems likely that we can recover other class number relations (possibly some of the other relations of \cite{M}) by studying the higher development coefficients (as defined in chapter 3 of \cite{EZ}) of the real-analytic Jacobi Eisenstein series $E_{2,1,0}^*(\tau,z,s)$ in the same way that we have studied its zeroth development coefficient $E_{2,1,0}^*(\tau,0,s)$ in this note, but we will not pursue that here.
\end{rem}

\section{Example: overpartition rank differences}

Consider the lattice $\Lambda = \mathbb{Z}^2$ with quadratic form $q(x,y) = x^2 - y^2$. There are no modular forms of weight $2$ for the dual Weil representation, and the $\mathfrak{e}_{(0,0)}$-component of the quasimodular Eisenstein series is $$E_2(\tau)_{(0,0)} = 1 - 16q - 24q^2 - 64q^3 - 72q^4 - 96q^5 - 96q^6 - 128q^7 - ...$$ This is a quasimodular form of level $4$ and we can verify by computing a few coefficients that it is

$$E_2(\tau)_{(0,0)} = E_2(2\tau) - 16 \sum_{n \, \mathrm{odd}} \sigma_1(n) q^n = 1 - 16 \sum_{n \, \mathrm{odd}} \sigma_1(n) q^n - 24 \sum_{n \, \mathrm{even}} \sigma_1(n/2) q^n.$$

The real-analytic Jacobi Eisenstein series of index $(m,\beta) = (1,0)$ corresponds to the real-analytic Eisenstein series for the lattice $\tilde \Lambda = \mathbb{Z}^3$ with quadratic form $q'(x,y,z) = x^2 - y^2 + z^2$ under the theta decomposition. It was shown in example 19 of \cite{W2} that the component of $\mathfrak{e}_{(0,0,0)}$ in the corresponding mock Eisenstein series is $$1 - 2q - 4q^2 - 8q^3 - ... = \sum_{n=0}^{\infty} (-1)^n \overline{\alpha(n)} q^n,$$ where $\overline{\alpha(n)}$ is the difference between the number of even-rank and odd-rank overpartitions of $n$. (We refer to \cite{BL} for the definition of overpartition rank differences and their appearance in weight-$3/2$ mock modular forms.) We will also need to understand the component of $\mathfrak{e}_{(0,0,1/2)}$ in this mock Eisenstein series. A quick computation shows that this is the series $$E_{3/2}(\tau)_{(0,0,1/2)} = -4q^{3/4} - 4q^{7/4} - 12q^{11/4} - 8q^{15/4} - 12q^{19/4} - 12q^{23/4} - 16q^{27/4} - ...$$

\begin{lem} The coefficient of $q^{n/4}$ in the series $E_{3/2}(\tau)_{(0,0,1/2)}$ is $$\begin{cases} -12 H(n) : & n \equiv 3 \, \bmod 8; \\ -4 H(n): & n \equiv 7 \, \bmod 8; \end{cases}$$ where $H(n)$ is the Hurwitz class number.
\end{lem}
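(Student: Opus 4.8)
The plan is to identify $E_{3/2}(\tau)$ as the (mock) weight-$3/2$ Eisenstein series attached to $\tilde\Lambda = \mathbb{Z}^3$ with $q'(x,y,z) = x^2 - y^2 + z^2$ and to read off the $\mathfrak{e}_{(0,0,1/2)}$-component from its coefficient formula. Since $q'(0,0,1/2) = 1/4$, this component is supported on exponents $q^{N/4}$ with $N \equiv 3 \pmod 4$, which already splits into the two cases $N \equiv 3, 7 \pmod 8$. By the weight-$3/2$ formula of the previous sections (and of \cite{W2}), the holomorphic coefficient of $q^{N/4} \mathfrak{e}_{(0,0,1/2)}$ is a global constant times $\tilde L(N/4, 0, (0,0,1/2), 3)$, whose Euler product is assembled from the local counts $\mathbf{N}(p^\nu)$ of solutions of $q'(v - \gamma) + N/4 \equiv 0 \bmod p^\nu$.

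First I would isolate the arithmetic by comparing with the unary lattice $\langle 1 \rangle$ (the form $z^2$), whose weight-$3/2$ Eisenstein series is Zagier's series and whose $\mathfrak{e}_{1/2}$-coefficient is exactly $-12 H(N)$, as recorded in the class number section and in \cite{W2}. The only difference between $\tilde\Lambda$ and $\langle 1 \rangle$ is the binary factor $x^2 - y^2$, which has determinant $-4$ and is therefore unimodular (split hyperbolic) over $\mathbb{Z}_p$ for every odd $p$. Thus at odd primes the local densities of the shifted ternary form factor as the unary densities for $z^2$ times the flat density of a split binary form, which is independent of the represented value; the odd-prime Euler factors and the global $L$-value $L(1, \chi_{-N})$ are therefore exactly those of Zagier's series, so the coefficient is a fixed constant times $H(N)$ on each progression, the constant depending only on the $2$-adic factor.

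The crux is the prime $p = 2$. Writing a vector of the coset as $(x, y, z + 1/2)$ and clearing denominators, one is counting representations of $N$ by a ternary form with two even and one odd coordinate; reducing modulo $8$ shows that $N \equiv 3$ forces $x$ and $y$ to have opposite parity while $N \equiv 7$ forces them to agree, and these two constraints have $2$-adic densities in the ratio $3 : 1$. Carrying this through the formula for $L_2$, together with the change in the global factor $1/\sqrt{|\tilde\Lambda'/\tilde\Lambda|}$ in passing from $\langle 1 \rangle$ (determinant $2$) to $\tilde\Lambda$ (determinant $8$), multiplies Zagier's coefficient $-12 H(N)$ by $1$ when $N \equiv 3 \pmod 8$ and by $1/3$ when $N \equiv 7 \pmod 8$, giving $-12 H(N)$ and $-4 H(N)$ respectively. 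I expect this $2$-adic density computation, and the bookkeeping of the normalizing constants, to be the main obstacle.

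A cleaner and fully rigorous way to fix the two constants, avoiding the $2$-adic grind, is to argue as in the lemma on $A(n,r,\gamma)$: by Skoruppa's refinement of the Serre--Stark theorem (\cite{Sk}, \cite{SS}), the holomorphic Eisenstein coefficient is, on each residue class of $N$ modulo the level, a fixed rational multiple of $H(N)$ determined by its shadow. It then suffices to prove proportionality to $H(N)$ on each of the classes $N \equiv 3, 7 \pmod 8$ (the content of the odd-prime computation above) and to pin the two proportionality constants down from a single coefficient in each class. The displayed expansion gives $-4 = c_3 H(3) = c_3 / 3$ and $-4 = c_7 H(7) = c_7$, so $c_3 = -12$ and $c_7 = -4$, matching the claim; one checks consistency against $-12 q^{11/4}$, $-8 q^{15/4}$, $\dots$ using $H(11) = 1$ and $H(15) = 2$.
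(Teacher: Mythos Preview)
Your approach is the paper's: compare the ternary Eisenstein series to Zagier's unary one, note that the odd-prime Euler factors agree because $x^2-y^2$ is $p$-unimodular for odd $p$, and reduce everything to the $2$-adic factor together with the change in $1/\sqrt{|\Lambda'/\Lambda|}$. One correction to your $2$-adic heuristic: the paper computes the ternary local factor to be $L_2(n,\gamma,s)=2^s/(2^s-4)$ \emph{independently of the residue class of $n$}, so the $3:1$ ratio does not live in the parity densities of $(x,y)$ for the ternary form as you suggest; it lives entirely in the \emph{unary} local factor, where $(1-2^{-1})L_2(n,\gamma,1)$ equals $1/2$ for $4n\equiv 3\pmod 8$ and $3/2$ for $4n\equiv 7\pmod 8$. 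Combined with the extra $1/2$ from the discriminant, the ternary-to-unary ratio is $1$ and $1/3$ as you state, so your conclusion is right even though the mechanism is misattributed. Your fallback of pinning the two constants from $-4q^{3/4}$ and $-4q^{7/4}$ is a legitimate way to bypass the $2$-adic bookkeeping once proportionality to $H(n)$ on each class is established.
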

We remark without proof that this series has an interesting closed form: \begin{align*} &\quad -4q^{3/4} - 4q^{7/4} - 12q^{11/4} - 8q^{15/4} - 12q^{19/4} - ... \\ &= -12 \sum_{n \equiv 3 \, (8)} H(n) q^{n/4} -4 \sum_{n \equiv 7\, (8)} H(n) q^{n/4} \\ &= -4q^{-1/4} \Big( \prod_{n=1}^{\infty} \frac{1 + q^n}{1-q^n} \Big) \Big( \frac{q}{1+q} - \frac{3q^4}{1+q^3} + \frac{5q^9}{1 + q^5} - \frac{7q^{16}}{1 + q^7} + \frac{9q^{25}}{1+q^9} - ... \Big). \end{align*}

\begin{proof} We can use the exact formula for the coefficients given by Bruinier and Kuss \cite{BK} in the form of \cite{W2} (where it was only stated for even-dimensional lattices): for an odd-dimensional lattice of dimension $e$, the coefficient $c(n,\gamma)$ of $E_k(\tau)$ is given by $$\frac{(2\pi)^k n^{k-1} (-1)^{b^+ / 2} L(k-1/2, \chi_{\mathcal{D}})}{\sqrt{|\Lambda'/\Lambda|} \Gamma(k) \zeta(2k-1)} \sum_{d | f} \mu(d) \chi_{\mathcal{D}}(d) d^{1/2-k} \sigma_{2 - 2k}(f/d) \prod_{p | ( 2|\Lambda'/\Lambda| )} \Big[ \frac{1 - p^{e/2 - k}}{1 - p^{1-2k}} L_p(n,\gamma,k+e/2-1) \Big].$$ For the lattice $\mathbb{Z}$ with quadratic form $q(x) = x^2$ (where $E_{3/2}$ is Zagier's mock Eisenstein series), and $\gamma = 1/2$ and $n \in \mathbb{Z} - q(\gamma)$, it is not hard to see that the local factor at $p=2$ is $$L_2(n,\gamma,s) = \begin{cases} 1 : & 4n \equiv 3 \, (\bmod 8); \\ (2^s + 1) / (2^s - 1): & 4n \equiv 7 \, (\bmod 8); \end{cases}$$ since $n$ always has valuation $-2$ modulo $p=2$, resulting in the values $(1 - 2^{-1}) L_2(n,\gamma,1) = 1/2$ if $4n \equiv 3 \, (8)$ resp. $(1-2^{-1}) L_2(n,\gamma,1) = 3/2$ if $4n \equiv 7 \, (8)$. On the other hand, for the lattice $\mathbb{Z}^3$ with quadratic form $q(x,y,z) = x^2 - y^2 + z^2$, the local factor is always $$L_2(n,\gamma,s) = \frac{2^s}{2^s - 4}$$ with $\lim_{s \rightarrow 0} (1 - 2^{-2s}) L_2(n,\gamma,2+2s) = 1$. Since all other terms in the formula are the same between the two lattices (other than an extra factor of $1/2$ from $\frac{1}{\sqrt{|\Lambda'/\Lambda|}}$), and the coefficient of $q^{n/4}$ in Zagier's mock Eisenstein series is $-12H(n)$, we get the claimed formula.
\end{proof}

In example 19 of \cite{W2} it was shown that the real-analytic correction of $E_{3/2}(\tau)$ for the lattice $\tilde \Lambda$ is $$E_{3/2}^*(\tau,0) = E_{3/2}(\tau) + \frac{1}{\sqrt{y}} \sum_{\gamma \in \tilde \Lambda'/\tilde \Lambda} \sum_{\substack{n \in \mathbb{Z} - q(\gamma) \\ n \le 0}} a(n,\gamma) \beta(-4\pi n y) q^n \mathfrak{e}_{\gamma}$$ with shadow \begin{align*} \sum_{\gamma,n} a(-n,\gamma) q^n \mathfrak{e}_{\gamma} &= -8\Big( 1 + 2q + 2q^4 + ... \Big) (2 \mathfrak{e}_{(0,0,0)} + \mathfrak{e}_{(1/2,1/2,0)} + \mathfrak{e}_{(0,1/2,1/2)}) - \\ &\quad \quad -8 \Big( 2q^{1/4} + 2q^{9/4} + 2q^{25/4} + ... \Big) (2 \mathfrak{e}_{(1/2,1/2,1/2)} + \mathfrak{e}_{(0,0,1/2)} + \mathfrak{e}_{(1/2,0,0)}). \end{align*}
Therefore the constants of section 5 for square $r^2 - 4n$ are $$A(n,r,0) = \begin{cases} -16: & r^2 - 4n\; \text{odd or zero}; \\ -32: & r^2 - 4n \; \text{even and nonzero}. \end{cases}$$ The formula for the Poincar\'e square series of index $(1,0)$ implies that the coefficient of $q^n \mathfrak{e}_{(0,0)}$ in $Q_{2,1,0}$ is $$\sum_{r = -\infty}^{\infty} -|\overline{\alpha}(n - r^2)| - 4 \sum_{r \, \mathrm{odd}} H(4n-r^2) + \frac{1}{8} \sum_{\substack{r \in \mathbb{Z} \\ r^2 - 4n = \square}} A(n,r,0) \Big( |r| - \sqrt{r^2 - 4n} \Big) + \begin{cases} 4: & n =\square; \\ 0: & \mathrm{otherwise}; \end{cases}$$ if $n$ is even and $$\sum_{r=-\infty}^{\infty} -|\overline{\alpha}(n - r^2)| - 12 \sum_{r \, \mathrm{odd}} H(4n-r^2) + \frac{1}{8} \sum_{\substack{r \in \mathbb{Z} \\ r^2 - 4n = \square}} A(n,r,0) \Big( |r| - \sqrt{r^2 - 4n} \Big) + \begin{cases} 4: & n = \square; \\ 0: & \mathrm{otherwise}; \end{cases} $$ if $n$ is odd. The additional $4$ at the end if $n$ is square is due to the constant term in the mock Eisenstein series $E_{3/2}$ being $1$ rather than $-1$: $$E_{3/2}(\tau)_{(0,0,0)} = 1 - \sum_{n=1}^{\infty} |\overline{\alpha}(n)| q^n = -\sum_{n=0}^{\infty} |\overline{\alpha}(n)| + 2,$$ and because we use the convention $\overline{\alpha}(0) = 1.$ As before, $\frac{|r| - \sqrt{r^2 - 4n}}{2}$ takes exactly the values $\mathrm{min}(d,n/d)$ for divisors $d$ of $n$ (but counts $\sqrt{n}$ twice if $n$ is square); and one can show that if $n$ is odd and $r^2 - 4n$ is square, then $r^2 - 4n$ is always even, while if $n$ is even, then $r^2 - 4n$ is even exactly when the divisor $d = \frac{|r| - \sqrt{r^2 - 4n}}{2}$ and $n/d$ are both even. \\

Denote $\lambda_1(n) = \frac{1}{2} \sum_{d | n} \min(d,n/d)$ as in \cite{M}. Comparing coefficients with the Eisenstein series $E_2(\tau)_{(0,0)}$ gives the following formula:

\begin{prop} If $n \in \mathbb{N}$ is odd, then $$\sum_{r=-\infty}^{\infty} |\overline{\alpha}(n - r^2)| = -16 \lambda_1(n) + 16 \sigma_1(n) - 12 \sum_{r \, \mathrm{odd}} H(4n-r^2) + \begin{cases} 4: & n = \square; \\ 0: & \mathrm{otherwise}. \end{cases} $$ If $n \in \mathbb{N}$ is even, then $$\sum_{r=-\infty}^{\infty} |\overline{\alpha}(n- r^2)| = -8 \lambda_1(n) - 16 \lambda_1(n/4) + 24 \sigma_1(n/2) - 4 \sum_{r \, \mathrm{odd}} H(4n - r^2) + \begin{cases} 4: & n = \square; \\ 0: & \mathrm{otherwise}. \end{cases}$$
\end{prop}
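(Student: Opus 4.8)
The plan is to leverage the vanishing of weight-$2$ modular forms together with the coefficient data already assembled, reducing the proposition to a single comparison of Fourier coefficients. First I would observe that $M_2(\rho^*) = 0$ forces $S_2(\rho^*) = 0$, so the cusp-form correction $Q_{2,1,0}^*(\tau,0)$ vanishes identically and hence $Q_{2,1,0}(\tau) = E_2(\tau)$ exactly. Comparing the $\mathfrak{e}_{(0,0)}$-components then equates the coefficient of $q^n$ computed from the overpartition/class-number expansion of $Q_{2,1,0}$ (the two displayed formulas immediately preceding the statement) with the coefficient read off from the explicit expansion $E_2(\tau)_{(0,0)} = 1 - 16\sum_{n\,\mathrm{odd}}\sigma_1(n)q^n - 24\sum_{n\,\mathrm{even}}\sigma_1(n/2)q^n$. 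Solving this equality for $\sum_r |\overline{\alpha}(n-r^2)|$ is what produces the two asserted formulas.

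The substantive step is to evaluate the finite correction $\frac18\sum_{r^2-4n=\square}A(n,r,0)(|r| - \sqrt{r^2-4n})$ in closed form. Here I would reuse the divisor correspondence from the class-number-relation section: as $r$ ranges over integers with $r^2 - 4n$ a perfect square, the quantity $\frac12(|r| - \sqrt{r^2-4n})$ runs through the values $\min(d,n/d)$ as $d$ ranges over divisors of $n$, with $\sqrt{n}$ counted twice when $n$ is a square. Writing $r - \sqrt{r^2-4n} = 2d$ and $r + \sqrt{r^2-4n} = 2n/d$ exhibits $r$ and $\sqrt{r^2-4n}$ as $d + n/d$ and $n/d - d$, so the parity of $r^2 - 4n$, which selects between the two values of $A(n,r,0)$, is determined entirely by the parities of $d$ and $n/d$.

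For $n$ odd, both $d$ and $n/d$ are odd, so $r^2 - 4n$ is even whenever it is nonzero and $A(n,r,0) = -32$ throughout, the sole exception being the term $r^2 = 4n$ (present only when $n$ is a square), where $A = -16$; collecting terms gives exactly $-16\lambda_1(n)$ after the competing $\sqrt{n}$ contributions cancel. For $n$ even I would split the divisors of $n$ into the pairs with $d, n/d$ both even and the pairs with exactly one of $d, n/d$ even. The both-even pairs repackage, via $d = 2d_1$, $n/d = 2d_2$ with $d_1 d_2 = n/4$, into the divisor sum for $n/4$ and contribute $-16\lambda_1(n/4)$ (with the convention $\lambda_1(n/4)=0$ when $4\nmid n$); the mixed-parity pairs, where $r^2-4n$ is odd and $A = -16$, together with the residual both-even contribution produce $-8\lambda_1(n)$. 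In both parities the $\sqrt{n}$ term coming from $r^2 = 4n$ cancels against the double-counted $\sqrt{n}$ in the divisor multiset, so the correction equals precisely the $\lambda_1$-combination appearing in the statement.

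The main obstacle I anticipate is exactly the bookkeeping in the even case: one must track simultaneously the parity-dependent value of $A(n,r,0)$, the repackaging of the both-even divisor pairs as divisors of $n/4$, and the exact cancellation of the two $\sqrt{n}$-contributions (the zeroth term $r^2 = 4n$ against the extra $\sqrt{n}$ from the multiset). Once the correction is in hand, substituting it into the coefficient equality and isolating $\sum_r |\overline{\alpha}(n-r^2)|$ is a direct rearrangement using only results established above.
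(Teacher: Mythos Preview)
Your proposal is correct and follows essentially the same approach as the paper: both arguments use the vanishing of $S_2(\rho^*)$ to force $Q_{2,1,0}=E_2$, then compare the $\mathfrak{e}_{(0,0)}$-coefficients via the divisor correspondence $\frac{1}{2}(|r|-\sqrt{r^2-4n})=\min(d,n/d)$ together with the parity analysis of $r^2-4n$ (always even for $n$ odd; even exactly when $d,n/d$ are both even for $n$ even). Your explicit repackaging of the both-even divisor pairs as divisors of $n/4$ is precisely the computation the paper leaves implicit in the phrase ``Comparing coefficients\ldots gives the following formula.''
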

Here, we set $\lambda_1(n/4) = 0$ if $n$ is not divisible by $4$, and $\overline{\alpha}(n) = H(n) = 0$ for $n < 0$. Note that this can also be expressed as a relation among Hurwitz class numbers since $|\overline{\alpha}(n)|$ itself can be written in terms of Hurwitz class numbers, as observed in corollary 1.2 of \cite{BL2}.

\bibliographystyle{plainnat}
\bibliography{\jobname}

\end{document}